\newtheorem{theorem}{Theorem}[section]
\newtheorem{lemma}[theorem]{Lemma}
\newtheorem{proposition}[theorem]{Proposition}
\newtheorem{corollary}[theorem]{Corollary}
\newtheorem{conjecture}[theorem]{Conjecture}
\newtheorem{theorem-definition}[theorem]{Theorem-Definition}
\newtheorem{theorem-construction}[theorem]{Theorem-Construction}
\newtheorem{lemma-definition}[theorem]{Lemma--Definition}
\newtheorem{lemma-construction}[theorem]{Lemma--Construction}
\theoremstyle{definition}
\newtheorem{definition}[theorem]{Definition}
\newtheorem{remark}[theorem]{Remark}
\newtheorem{example}[theorem]{Example}
\newcommand{\und}{\underline}
\newcommand{\OO}{\mathcal{O}}
\newcommand{\C}{\mathbb{C}}
\newcommand{\Z}{\mathbb{Z}}
\newcommand{\old}[1]{}
\newcommand{\Q}{{\mathbb Q}}
\newcommand{\U}{{\rm U}}
\newcommand{\val}{{\rm val}}
\newcommand{\hlra}{\lhook\joinrel\longrightarrow}
\newcommand{\lms}{\longmapsto}
\newcommand{\lra}{\longrightarrow}
\newcommand{\ra}{\rightarrow}
\newcommand{\be}{\begin{equation}}
\newcommand{\ee}{\end{equation}}
\newcommand{\bt}{\begin{theorem}}
\newcommand{\et}{\end{theorem}}
\newcommand{\bd}{\begin{definition}}
\newcommand{\ed}{\end{definition}}
\newcommand{\bp}{\begin{proposition}}
\newcommand{\ep}{\end{proposition}}
\newcommand{\blc}{\begin{lemma-construction}}
\newcommand{\elc}{\end{lemma-construction}}
\newcommand{\bl}{\begin{lemma}}
\newcommand{\el}{\end{lemma}}
\newcommand{\bc}{\begin{corollary}}
\newcommand{\ec}{\end{corollary}}
\newcommand{\bcon}{\begin{conjecture}}
\newcommand{\econ}{\end{conjecture}}
\newcommand{\la}{\label}
\begin{document}

\title{ Tensor invariants, Saturation problems,  and Dynkin automorphisms }
\author{Jiuzu Hong, Linhui Shen}
\date{}
\maketitle

\begin{abstract}
Let $G$ be a connected almost simple algebraic group with a Dynkin automorphism $\sigma$.
 Let $G_\sigma$ be the connected  almost simple algebraic group associated to $G$ and $\sigma$. 
 We prove that
the dimension of the tensor invariant space of $G_\sigma$
is equal to the trace of $\sigma$ on the corresponding tensor invariant space of $G$.   
We prove that  if $G$ has the saturation property then so does $G_\sigma$. As a consequence, we show that the spin group ${\rm Spin}(2n+1)$ is of  saturation property with factor $2$, which strengthens the results of Belkale-Kumar \cite{BK} and  Sam \cite{Sam}
in the case of type $B_n$. 
\end{abstract}

\tableofcontents

\section{Introduction}
Let $G$ be a connected almost simple algebraic group with a Dynkin automorphism $\sigma$. One can associate another almost simple algebraic group $G_\sigma$ (see Section \ref{Notations}).   We investigate the relation between the tensor invariant spaces of $G$ and  $G_\sigma$ in this paper. 

In detail, the dominant weights of $G_\sigma$ are identified with the $\sigma$-invariant dominant weights of $G$. Let ${\und \lambda}=(\lambda_1,\ldots, \lambda_n)$ be a sequence of dominant weights of $G_\sigma$.  Denote by $V_{\lambda_i}$ (respectively $W_{\lambda_i}$) the irreducible representation of $G$ (respectively $G_\sigma$) of highest weight $\lambda_i$. We are interested in the pair of tensor invariant spaces
\be
V_{\und \lambda}^G:=(V_{\lambda_1}\otimes \ldots \otimes V_{\lambda_n})^G, \quad W_{\und \lambda}^{G_\sigma}:=(W_{\lambda_1}\otimes \ldots \otimes W_{\lambda_n})^{G_\sigma}.
\ee

\subsection{Main results}
We present two main results relating $V_{\und \lambda}^G$ and $W_{\und \lambda}^{G_\sigma}$.

\subsubsection{Twining formula}
 Let $\lambda$ be a dominant weight of $G_\sigma$. 
 The Dynkin automorphism $\sigma$ uniquely determines an action $\sigma$ on the representation $V_\lambda$ of $G$ by keeping the highest weight vectors invariant.
 Let $\mu$ be a weight of $G_\sigma$.
 The twining character formula asserts that the trace of $\sigma$ on the weight space $V_\lambda(\mu)$ is equal to the dimension of $W_\lambda(\mu)$. 
 
 The twining character formula is originally due to Jantzen \cite{Jan}. Since then, there has been many different proofs appearing in the literature 
(e.g. \cite{FSS, N1, N2, KLP, H}).
 One of these approaches  uses natural bases of the representations that are compatible with the action $\sigma$.
 It was achieved via canonical basis in \cite{KLP}, and via MV cycles in \cite{H}.  
Due to the works of Lusztig \cite{L3}, Berenstein-Zelevinsky \cite{BZ} and Kamnitzer \cite{Ka1}, canonical basis and  MV cycles can be parametrized by many different but equivalent combinatorial objects,  i.e. Lusztig data, BZ patterns, and MV polytopes. These parametrizations are crucially used in the proofs of \cite{KLP} and \cite{H}.  

The first result of the present paper provides an analogue of the  twining  formula in the setting of tensor invariant spaces. 
Note that the Dynkin automorphism $\sigma$ determines an action $\sigma$ on $V_{\und \lambda}^G$. 
Our first theorem is as follows.

 \begin{theorem}
 \label{Twining_tensor_Multiplicity} 
The trace of $\sigma$ on the space $V_{\und \lambda}^G$ is equal to the dimension of $W_{\und \lambda}^{G_{\sigma}}$:
  \be 
  \la{ttm1}
  {\rm trace}~(\sigma: V_{\und \lambda}^{G}\to V_{\und \lambda}^{G})={\dim}~W_{\und \lambda}^{G_\sigma}.
  \ee
\end{theorem}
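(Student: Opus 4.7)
The plan is to reduce the tensor-invariant twining identity to the known weight-space twining character formula by exhibiting a basis of $V_{\und\lambda}^G$ that is permuted by $\sigma$ and whose $\sigma$-fixed elements index a basis of $W_{\und\lambda}^{G_\sigma}$. Once such a basis is in hand, the trace of $\sigma$ on $V_{\und\lambda}^G$ equals the number of fixed basis elements, and the theorem becomes a bijection between the two fixed-point sets.

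Concretely, I would use the basis of $V_{\und\lambda}^G$ furnished by the geometric Satake correspondence: the set $\mathcal{Z}^{\und\lambda}_0(G)$ of stable MV cycles of weight $0$ in the convolution affine Grassmannian $\mathrm{Gr}^{\und\lambda}_G$, equivalently the highest-weight-$0$ elements in the tensor product of Kashiwara crystals $B(\lambda_1)\otimes\cdots\otimes B(\lambda_n)$, or stable stacks of MV polytopes in the sense of [Ka1]. Since $\sigma$ acts algebraically on $\mathrm{Gr}_G$ preserving MV cycles, it literally permutes this basis, so
$$\operatorname{tr}\bigl(\sigma\mid V_{\und\lambda}^G\bigr) \;=\; \#\,\mathcal{Z}^{\und\lambda}_0(G)^{\sigma}.$$
The last step is to identify $\mathcal{Z}^{\und\lambda}_0(G)^{\sigma}$ with $\mathcal{Z}^{\und\lambda}_0(G_\sigma)$, extending Hong's identification [H] of $\sigma$-fixed MV cycles on a single $\mathrm{Gr}^\lambda_G$ with MV cycles on $\mathrm{Gr}^\lambda_{G_\sigma}$ from one slice to the convolution variety. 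Since $\#\,\mathcal{Z}^{\und\lambda}_0(G_\sigma) = \dim W_{\und\lambda}^{G_\sigma}$ by geometric Satake applied to $G_\sigma$, this would complete the proof.

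The main obstacle I expect is the compatibility of the $\sigma$-action with the convolution/tensor-product structure. In the weight-space case the identification takes place on a single $\mathrm{Gr}^\lambda_G$, but here one must check that the convolution (which combinatorially corresponds to Kashiwara's tensor product rule on crystals, or to stacking MV polytopes) is $\sigma$-equivariant, and that the stability condition cutting out tensor invariants is preserved by $\sigma$. Technically this means transporting the $\sigma$-action through the explicit parametrizations by Lusztig data / BZ patterns / MV polytopes of [L3, BZ, Ka1] and verifying equivariance at the combinatorial level. As an independent cross-check one could pursue an analytic route: write $\operatorname{tr}(\sigma\mid V_{\und\lambda}^G)=\int_K\prod_i \chi^\sigma_{\lambda_i}(k)\,dk$ for a $\sigma$-stable compact form $K$, apply the twisted Weyl integration formula on $K\rtimes\langle\sigma\rangle$, invoke the classical twining character formula $\chi^\sigma_{\lambda_i}(t)=\chi^{G_\sigma}_{\lambda_i}(\bar t)$ factor by factor, and match the twisted Jacobian on $K$ with the ordinary Weyl-integration Jacobian on the compact form of $G_\sigma$; the technical heart then becomes this Jacobian comparison, which follows from the folded root datum construction of $G_\sigma$.
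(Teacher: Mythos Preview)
Your high-level strategy coincides with the paper's: exhibit a basis of $V_{\und\lambda}^G$ permuted by $\sigma$, so that the trace is the number of fixed basis vectors, and then biject the fixed set with a basis of $W_{\und\lambda}^{G_\sigma}$. The paper uses exactly the Satake basis indexed by top components of the cyclic convolution variety, though note that this lives on the affine Grassmannian of the Langlands dual $G^\vee$, not of $G$.

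Two places in your sketch deserve comment. First, the sentence ``since $\sigma$ acts algebraically on the Grassmannian preserving MV cycles, it literally permutes this basis'' hides real work. The geometric action of $\sigma^\vee$ on components certainly permutes the cycle classes, but you must show that this agrees with the representation-theoretic action of the given $\sigma$ on $V_{\und\lambda}^G$. The paper devotes Proposition~\ref{Tensor_invariant_cycle_Dynkin} to this (via Tannakian formalism and the fusion description of convolution) and then uses Lemma~\ref{Dynkin_automorphisms_Conjugate} to show that any two Dynkin automorphisms inducing the same diagram automorphism have equal trace, so the given $\sigma$ may be replaced by the Tannakian one.

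Second, and more substantially, the paper does \emph{not} establish the fixed-point bijection by extending the MV-polytope/Lusztig-data analysis of \cite{H} to convolution varieties. Instead it parametrizes the Satake basis by tropical points of the Goncharov--Shen configuration space ${\rm Conf}_n^\times({\cal A})$ of decorated flags for $G^\vee$ (Theorem~\ref{3.27.3.23.14h}), shows that $\sigma^\vee$ is a positive automorphism of this space compatible with the parametrization (Lemma~\ref{pos.invast}, Theorem~\ref{comm.kappa}), and proves that its $\sigma^\vee$-fixed tropical points are exactly the tropical points of the configuration space for $(G_\sigma)^\vee=(G^\vee)^{\sigma^\vee}$ (Theorem~\ref{technical.thm.1}). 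The payoff of this route is that arbitrary $n$-fold tensor products are handled uniformly and the $\sigma$-equivariance check collapses to a single computation on $U_\ast$ (Lemmas~\ref{u.inv.3.18.s}, \ref{3.21.3.19.s}), sidestepping the compatibility with Kashiwara's tensor rule that you correctly flag as the main obstacle in your approach. Your analytic alternative via twisted Weyl integration is not pursued in the paper; it would plausibly give the numerical identity but not the finer bijection of bases that the paper obtains and later reuses for the saturation theorem.
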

 Theorem \ref{Twining_tensor_Multiplicity} is proved in Section \ref{Section_Proof_of_Twinning_Multiplicity}.  We remark here that Theorem \ref{Twining_tensor_Multiplicity} implies similar twining formulas for more general multiplicity spaces.  
 
\subsubsection{Saturation property}
\label{Saturation_Problem_Result}
A reductive group $G$ is said of saturation property  with  factor $k$ if
\begin{itemize}
 \item for any dominant weights $\lambda_1,\lambda_2,\cdots,\lambda_m$ such that $\sum_{i=1}^m \lambda_i$ is in the root lattice of $G$, if 
$(V_{N\lambda_1}\otimes V_{N\lambda_2}\otimes\cdots \otimes V_{N\lambda_m})^G\not= 0$ for some positive integer $N,$ then  $(V_{k\lambda_1}\otimes V_{k\lambda_2}\otimes\cdots \otimes V_{k \lambda_n})^G\not=
0$.
\end{itemize}

Kapovich-Millson $\cite{KM}$ proved that every almost simple group is of  saturation property but with a wild factor.  
There is a general saturation conjecture asserting that  every simply-laced group is of  saturation factor $1$  (\cite{KM2}). When $G={\rm SL}_n$, it was first proved by Knutson-Tao \cite{KT} using honeycombs. A different proof was due to Derksen-Weyman \cite{DW}. 
When $G={\rm Spin}(8)$, it was proved  by Kapovich-Kumar-Millson \cite{KKM}. It is still open for  simply-laced groups of other types.   For a more thorough survey on saturation problems, see \cite[Section 8]{Ku}.

The second result of this paper shows that the saturation property of $G$ implies the saturation property of $G_\sigma$.
\begin{theorem}
\label{Saturation_Problem_Dynkin}
 If $G$ is of saturation property with factor $k$,  then $G_\sigma$  is of saturation property  with factor $c_{\sigma}k $, where
\begin{equation}
\la{satu.fac.s}
c_{\sigma}=\begin{cases} 
 2 \qquad  &\text{ if  }  G  \text{ is not of type }  A_{2n} \text{  and }  \sigma \text{ is  of order }  2,\\
 3 \qquad  &\text{ if  } \sigma  \text{ is of order 3},\\
4  \qquad  &\text{ if  } \sigma  \text{ is of order 2 and } G  \text{ is of type } A_{2n}.
\end{cases}  
\end{equation}
 \end{theorem}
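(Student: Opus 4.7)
The plan is to deduce the saturation property of $G_\sigma$ from that of $G$ via the twining formula of Theorem~\ref{Twining_tensor_Multiplicity}. Let $\und{\lambda}=(\lambda_1,\dots,\lambda_m)$ be dominant weights of $G_\sigma$ with $\sum_i\lambda_i\in Q(G_\sigma)$, and suppose $W^{G_\sigma}_{N\und{\lambda}}\neq 0$ for some $N\geq 1$. The goal is to show $W^{G_\sigma}_{c_\sigma k\und{\lambda}}\neq 0$.

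The easy direction comes from Theorem~\ref{Twining_tensor_Multiplicity}: since $\mathrm{trace}(\sigma\,|\,V^G_{N\und{\lambda}})=\dim W^{G_\sigma}_{N\und{\lambda}}\geq 1$, we have $V^G_{N\und{\lambda}}\neq 0$. The set of dominant sequences $\und{\mu}$ with $V^G_{\und{\mu}}\neq 0$ is a sub-semigroup via the Cartan inclusions $V_{\mu+\mu'}\hookrightarrow V_\mu\otimes V_{\mu'}$, so $V^G_{Nc_\sigma\und{\lambda}}\neq 0$. A case-by-case inspection of the embedding of $Q(G_\sigma)$ into $P(G)^\sigma$ verifies that $c_\sigma\sum_i\lambda_i\in Q(G)$; the factor $3$ for an order-$3$ Dynkin automorphism, in particular, comes from the denominators when the simple roots of $G_\sigma$ are written in terms of $\sigma$-orbits of simple roots of $G$. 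The saturation property of $G$ with factor $k$, applied to the sequence $c_\sigma\und{\lambda}$, then yields $V^G_{c_\sigma k\und{\lambda}}\neq 0$.

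The main obstacle is the return step: deducing $W^{G_\sigma}_{c_\sigma k\und{\lambda}}\neq 0$ from $V^G_{c_\sigma k\und{\lambda}}\neq 0$. By Theorem~\ref{Twining_tensor_Multiplicity} this amounts to showing $\mathrm{trace}(\sigma\,|\,V^G_{c_\sigma k\und{\lambda}})>0$. A priori the trace of a finite-order operator on a nonzero space can vanish through cancellation of root-of-unity eigenvalues, and the multiplier $c_\sigma$ is calibrated precisely to prevent this. One natural approach combines the Cartan embedding $V_{c_\sigma\mu}\hookrightarrow V_\mu^{\otimes c_\sigma}$ with the positivity $\mathrm{trace}(\sigma\,|\,V_\nu)=\dim W_\nu>0$ from the twining character formula; a more structural approach constructs an explicit $\sigma$-fixed invariant using the $\sigma$-action on canonical bases or MV polytopes, as in the proof of Theorem~\ref{Twining_tensor_Multiplicity}. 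The exceptional $A_{2n}$ case, with $c_\sigma=4$, is the most delicate: the middle $\sigma$-orbit $\{\alpha_n,\alpha_{n+1}\}$ is not orthogonal, and a direct Chevalley calculation shows that $\sigma$ acts by $-1$ on the root vector $[E_{\alpha_n},E_{\alpha_{n+1}}]$, which introduces the additional factor of $2$ beyond the generic order-$2$ case.
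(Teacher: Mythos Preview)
Your outline correctly identifies the overall shape of the argument and, crucially, the real difficulty: passing from $V^G_{c_\sigma k\und{\lambda}}\neq 0$ back to $W^{G_\sigma}_{c_\sigma k\und{\lambda}}\neq 0$. But you do not actually carry out this return step, and neither of the two approaches you sketch is developed enough to count as a proof. The Cartan embedding $V_{c_\sigma\mu}\hookrightarrow V_\mu^{\otimes c_\sigma}$ is not $\sigma$-equivariant in any way that forces the trace on the invariant subspace to be positive, and ``construct an explicit $\sigma$-fixed invariant using canonical bases or MV polytopes'' is a description of what one would like, not a construction. The remark about $[E_{\alpha_n},E_{\alpha_{n+1}}]$ in type $A_{2n}$ is suggestive but does not by itself produce the required invariant. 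As written, this is where the argument stops.

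The paper closes this gap by a different mechanism that bypasses traces entirely. Working on the dual side, it uses the Goncharov--Shen parametrization of the Satake basis by tropical points ${\bf C}_{\und{\lambda},G^\vee}$ and builds a \emph{summation map} $\Sigma_{\bf i}:{\bf C}_{\und{\lambda},G^\vee}\to({\bf C}_{c_\sigma\und{\lambda},G^\vee})^{\sigma^\vee}$ (Section~\ref{Average_Tropical_Point}, Theorem~\ref{technical.thm.2}): in Lusztig coordinates attached to a reduced word coming from $W^{\sigma}$, one simply adds a tropical point to its $\sigma^\vee$-translates (twice in the $A_{2n}$ case, which is exactly where $c_\sigma=4$ arises, via the explicit computation in Lemma~\ref{lemma.summ.s}). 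Convexity of the tropicalized potential ${\cal W}^t$ and linearity of ${\bf Ed}^t$ (Lemma~\ref{2.56.3.19.s}) ensure the sum lands in the right set. Combined with the bijection $({\bf C}_{\und{\mu},G^\vee})^{\sigma^\vee}\simeq{\bf C}_{\und{\mu},(G_\sigma)^\vee}$ (Theorem~\ref{technical.thm.1}), the chain becomes
\[
{\bf C}_{N\und{\lambda},(G_\sigma)^\vee}\neq\emptyset \Rightarrow {\bf C}_{N\und{\lambda},G^\vee}\neq\emptyset \Rightarrow {\bf C}_{k\und{\lambda},G^\vee}\neq\emptyset \Rightarrow ({\bf C}_{c_\sigma k\und{\lambda},G^\vee})^{\sigma^\vee}\neq\emptyset \Rightarrow {\bf C}_{c_\sigma k\und{\lambda},(G_\sigma)^\vee}\neq\emptyset.
\]
Note also a small simplification of your forward step: since $\alpha_\eta\in Q(G)$ for every $\eta\in I_\sigma$, one already has $Q(G_\sigma)\subset Q(G)$, so $\sum_i\lambda_i\in Q(G)$ without any multiplier and saturation of $G$ can be applied directly to $\und{\lambda}$; the factor $c_\sigma$ enters only in the summation map, not in the root-lattice condition.
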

 
Theorem \ref{Saturation_Problem_Dynkin} is proved in Section \ref{Section_Proof_Saturation}.
 
 \vskip 2mm

Non simply-laced groups are expected to be of saturation factor $2$. 
For such groups not of type $G_2$, if we assume the saturation conjecture of simply-laced groups, then it follows from Theorem \ref{Saturation_Problem_Dynkin}.  
 In particular, the works of Knutson-Tao and Derksen-Weyman imply that
 \begin{corollary}
 \la{spin.cor}
 The spin group ${\rm Spin}(2n+1)$ is of saturation property with factor $2$.
 \end{corollary}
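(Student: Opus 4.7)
The plan is to realize ${\rm Spin}(2n+1)$ as $G_\sigma$ for a simply-laced pair $(G,\sigma)$ and then invoke Theorem~\ref{Saturation_Problem_Dynkin}. I would take $G = {\rm Spin}(2n+2)$, simply connected of type $D_{n+1}$, together with $\sigma$ the order-$2$ Dynkin automorphism swapping the two fork vertices of the $D_{n+1}$ diagram. A direct check of the folded root datum (the $\sigma$-invariant weight lattice of $G$ coincides with that of simply connected $B_n$) identifies $G_\sigma$ with ${\rm Spin}(2n+1)$.

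Since $G$ is of type $D_{n+1}$ (in particular, not of type $A_{2n}$) and $\sigma$ has order $2$, the first clause of Theorem~\ref{Saturation_Problem_Dynkin} gives $c_\sigma = 2$. Thus a saturation factor $k$ for $G$ transports to a saturation factor $2k$ for ${\rm Spin}(2n+1)$, and the corollary reduces to saturation factor $k = 1$ for the simply-laced group $G$ of type $D_{n+1}$.

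The remaining ingredient is the simply-laced saturation statement for type $D_{n+1}$. Knutson--Tao establish factor $1$ for type $A$ via honeycombs, and Derksen--Weyman reprove it using semi-invariants of type-$A$ quivers; the same quiver-theoretic framework applied to the $D$-quiver (or, alternatively, a branching reduction of $D_{n+1}$ tensor invariants to type $A$ ones) yields saturation factor $1$ for $D_{n+1}$. Feeding $k = 1$ into Theorem~\ref{Saturation_Problem_Dynkin} then delivers saturation factor $2 \cdot 1 = 2$ for ${\rm Spin}(2n+1)$.

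The hard part is the type $D$ saturation input itself; the folding step is formal once Theorem~\ref{Saturation_Problem_Dynkin} is available. Note that one could instead fold $G = {\rm SL}_{2n+1}$ of type $A_{2n}$, where Knutson--Tao directly gives $k = 1$, but then $c_\sigma = 4$ by the third clause of Theorem~\ref{Saturation_Problem_Dynkin}, producing only factor $4$. So it is essential to use the $D_{n+1}$ folding, and the principal obstacle is extracting factor $1$ for $D_{n+1}$ in arbitrary rank (beyond $D_4$, handled directly by Kapovich--Kumar--Millson) from the Knutson--Tao and Derksen--Weyman machinery.
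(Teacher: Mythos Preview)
There is a genuine error in your folding. Folding $D_{n+1}$ by the order-$2$ diagram automorphism produces type $C_n$, not $B_n$: in the notation of Section~\ref{Notations}, the orbit $\eta=\{n,n+1\}$ has $a_{n,n+1}=0$, so $\alpha_\eta=\alpha_n+\alpha_{n+1}$, and one computes $\langle\alpha_{n-1}^\vee,\alpha_\eta\rangle=-2$ while $\langle\alpha_\eta^\vee,\alpha_{n-1}\rangle=\langle\alpha_n^\vee,\alpha_{n-1}\rangle=-1$. This is the $C_n$ Cartan matrix, so $G_\sigma\cong{\rm Sp}(2n)$, not ${\rm Spin}(2n+1)$. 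The paper's own table in Section~\ref{Notations} records exactly this: $D_m$ with $\sigma$ of order $2$ gives $G_\sigma=C_{m-1}$. Your ``direct check of the folded root datum'' is therefore incorrect.

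The folding the paper actually uses is $G={\rm SL}_{2n}$ of type $A_{2n-1}$ with its order-$2$ diagram automorphism (Example~\ref{SL_Spin}); by the table this gives $G_\sigma={\rm Spin}(2n+1)$, and since $A_{2n-1}$ is not of type $A_{2m}$ the first clause of \eqref{satu.fac.s} yields $c_\sigma=2$. Saturation factor $k=1$ for ${\rm SL}_{2n}$ is precisely Knutson--Tao (or Derksen--Weyman), so the corollary follows immediately with no conjectural input. Note also that, independently of the folding error, your appeal to saturation factor $1$ for $D_{n+1}$ via Derksen--Weyman quivers or a ``branching reduction to type $A$'' is not available: as the paper states in Section~\ref{Saturation_Problem_Result}, the saturation conjecture for simply-laced groups beyond type $A$ and $D_4$ remains open, and neither reference you cite establishes it.
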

 
 \begin{proof}
 We start with $G={\rm SL}_{2n}$ together with a nontrivial Dynkin automorphism $\sigma$. By Example \ref{SL_Spin} in Section \ref{Notations},  $G_\sigma={\rm Spin}(2n+1)$.
Theorem \ref{Saturation_Problem_Dynkin} implies that the saturation factor for ${\rm Spin}(2n+1)$ is $2$.
 \end{proof}

 The 
 idea that the saturation property of big group implies the saturation property of the intimately related small group was also adopted by Belkale-Kumar  \cite{BK}, in which they showed that Knutson-Tao's theorem implies that the saturation factors of ${\rm SO}(2n+1)$ and ${\rm Sp}(2n)$ are $2$. However, the techniques used by them are very different from ours.


\subsection{Main methods}

The main methods of this paper are the geometric Satake correspondence (\cite{L, G, MV}) and the work of Goncharov-Shen \cite{GS} on parametrizations of bases of tensor invariant spaces. 
\vskip 2mm

Let $G^\vee$ be the Langlands dual group of $G$. Let ${\cal K}:=\C((t))$ and let ${\cal O}:=\C[[t]]$. We consider the {\it affine Grassmannian} of the Langlands dual group
$$
{\rm Gr}_{G^\vee}:=G^\vee({\cal K}) / G^\vee({\cal O}).
$$
The geometric Satake correspondence provides a connection between the geometry of the affine Grassmannian of $G^\vee$ and the representation theory of $G$. 
As a consequence, the top components
of certain {\it cyclic convolution variety}  of $G^\vee$ provides a basis of the corresponding
tensor invariant space  of $G$ (Lemma \ref{Tensor_Invariant_Cycles}).
Following Fontaine-Kamnitzer-Kuperberg \cite{BKK}, we call it the {\it Satake basis}.   

\vskip 2mm

Another main tool of this paper is the {\it tropical points} introduced by Goncharov-Shen \cite{GS}. The set  of tropical points is obtained by  the {\it tropicalization} of the {\it configuration space of decorated flags} of $G^\vee$.  The construction is briefly recalled in  Sections 3.2-3.5. 
Goncharov-Shen [{\it loc.cit.}] shows that there exists a canonical bijection between the set of tropical points and the Satake basis.
When $G^\vee={\rm PGL}_2$, the tropical points are equivalent to the integral laminations on a polygon (\cite[Section 12]{FG}). The latter give rise to canonical bases of the tensor invariant spaces of ${\rm SL}_2$ (\cite[Section 1.3.1]{GS}).
When $G=G^\vee={\rm GL}_m$, it generalizes  the work of Kamnitzer \cite{Ka2} that hives parametrizes the Satake basis of tensor invariant spaces of ${\rm GL}_m$, in the sense that tropical points encapsulate hives when $G={\rm GL}_m$ (\cite[Section 3.1]{GS}).

We would like to point out that it is very crucial for us to follow the work of Goncharov-Shen. Indeed, to prove Theorem \ref{Twining_tensor_Multiplicity} and Theorem \ref{Saturation_Problem_Dynkin},  eventually we need to work with the tropical points related to almost simple groups of arbitrary type. 

\subsection{Strategies}
Let $\und \lambda$ be a tuple of dominant weights of $G_\sigma$. 
 Denote by $\mathcal{B}_{\und \lambda, G}$ (respectively $\mathcal{B}_{\und \lambda, G_\sigma}$) the Satake basis of $V^G_{\und \lambda}$  (respectively $W^{G_\sigma}_{ \und \lambda}$).   Denote by ${\bf C}_{ \und \lambda, G^\vee}$ (respectively ${\bf C}_{\und \lambda, (G_\sigma)^\vee}$) the set of tropical points that parametrize $\mathcal{B}_{\und \lambda, G}$ (respectively $\mathcal{B}_{\und \lambda, G_\sigma}$).

The Satake basis $\mathcal{B}_{\und \lambda, G}$ has remarkable properties. One of them is that, there exists a Dynkin automorphism $\sigma$ of which the action  on  $V^G_{\und \lambda}$ interchanges the elements in Satake basis $\mathcal{B}_{\und \lambda, G}$ (Proposition \ref{Tensor_invariant_cycle_Dynkin}).  As a consequence, the trace of $\sigma$ on $V_{\und \lambda}^G$ is equal to the number of $\sigma$-fixed elements in $\mathcal{B}_{\und \lambda, G}$.


The Dynkin automorphism $\sigma$ of $G$ gives rise to a Dynkin automorphism $\sigma^\vee$ of $G^\vee$. The latter induces an automorphism  $\sigma^\vee$ on the set   ${\bf C}_{ \und \lambda, G^\vee}$ (see Section \ref{section3.5.1}).  Theorem \ref{comm.kappa} and Proposition \ref{Tensor_invariant_cycle_Dynkin} assert that the $\sigma^\vee$-action on  ${\bf C}_{\und \lambda, G^\vee}$  is compatible with the $\sigma$-action on  $\mathcal{B}_{\und \lambda, G}$, i.e., the following  diagram commutes
\begin{equation}
\la{commu.diag.3.10}
\xymatrix{
  {\bf C}_{\und \lambda, G^\vee}  \ar[d]^{\sigma^\vee}   \ar[r]^{\simeq} &    \mathcal{B}_{\und \lambda, G} \ar[d]^{\sigma}  \\
  {\bf C}_{\und \lambda, G^\vee}   \ar[r]^{\simeq }     &     \mathcal{B}_{\und \lambda, G}
}
\end{equation}
Therefore the $\sigma^\vee$-fixed points in ${\bf C}_{\und \lambda, G^\vee}$ are in bijection with the $\sigma$-fixed elements in $\mathcal{B}_{\und \lambda, G}$.

Theorem \ref{technical.thm.1} is one of the main technical results for proving Theorem \ref{Twining_tensor_Multiplicity}. It asserts that the $\sigma^\vee$-fixed  points in ${\bf C}_{\und \lambda, G^\vee}$ are in one-to-one correspondence with the points in ${\bf C}_{\und \lambda, (G_\sigma)^\vee}$, i.e., there exists a canonical bijection
\be 
\la{3.10.2015.l}
({\bf C}_{\und \lambda, G^\vee})^{\sigma^\vee }\simeq  {\bf C}_{\und \lambda, (G_\sigma)^\vee}.
\ee 
It  follows directly from \eqref{commu.diag.3.10} and \eqref{3.10.2015.l} that the $\sigma$-fixed elements in $\mathcal{B}_{\und \lambda, G}$ are in bijection with the elements in $\mathcal{B}_{\und \lambda, G_\sigma}$.   
Therefore Theorem \ref{Twining_tensor_Multiplicity} is proved.

\vskip 2mm
Another main technical result  is the
 summation map constructed in Section \ref{Average_Tropical_Point}
\be 
\la{3.10.2015.h}
\Sigma:~  {\bf C}_{\und \lambda, G^\vee}\lra  ({\bf C}_{c_\sigma \cdot \und \lambda, G^\vee})^{\sigma^\vee},
\ee
 where $c_\sigma$ is the number appearing in Theorem \ref{Saturation_Problem_Dynkin}.   
 It follows that if the set ${\bf C}_{\und \lambda, G^\vee}$ is nonempty then the set $({\bf C}_{c_\sigma \cdot \und \lambda, G^\vee})^{\sigma^\vee}$ is nonempty (Theorem \ref{technical.thm.2}). 
By the bijection \eqref{3.10.2015.l},  the set ${\bf C}_{c_\sigma \cdot \und \lambda, (G_\sigma)^\vee}$ is nonempty. 
In this way we transfer the saturation property of $G$ to the saturation property of $G_\sigma$ (see Section \ref{Section_Proof_Saturation}). 

\subsection{Other applications}
Along our proofs of Theorems \ref{Twining_tensor_Multiplicity}, \ref{Saturation_Problem_Dynkin},  we  get several interesting numerical results of representation theory related to $G$ and $G_\sigma$.  
\begin{proposition}
\label{Numerical_Results}
With the same setting as in Theorem \ref{Twining_tensor_Multiplicity} and Theorem \ref{Saturation_Problem_Dynkin},  we have
\begin{enumerate}
\item   $\dim V_{\und \lambda}^G\geq  \dim W_{\und \lambda}^{G_\sigma}$. 
\item  If $\dim V_{\und \lambda}^G = 1$, then $\dim W_{\und \lambda}^{G_\sigma}=1$.
\item  If $\dim V_{\und \lambda}^G  \not=0 $, then $\dim W_{c_\sigma \cdot \und \lambda}^{G_\sigma}\not= 0$.
\end{enumerate}
\end{proposition}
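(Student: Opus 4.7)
The plan is to observe that all three parts of Proposition~\ref{Numerical_Results} are immediate consequences of the machinery already assembled in the Strategies subsection: namely Theorem~\ref{Twining_tensor_Multiplicity}, the commutative diagram \eqref{commu.diag.3.10}, the bijection \eqref{3.10.2015.l}, and the summation map \eqref{3.10.2015.h}. So the proof is essentially a matter of reading off corollaries; no new technical input is required, and I would treat the proposition as a packaging lemma.

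For parts (1) and (2), I would use the commutative diagram \eqref{commu.diag.3.10} to conclude that $\sigma$ permutes the Satake basis $\mathcal{B}_{\und \lambda, G}$. Computing the trace of $\sigma$ in this permutation basis and applying Theorem~\ref{Twining_tensor_Multiplicity} gives
\begin{equation*}
\dim W_{\und \lambda}^{G_\sigma} \;=\; \mathrm{trace}\bigl(\sigma \colon V_{\und \lambda}^{G}\to V_{\und \lambda}^{G}\bigr) \;=\; \bigl|\mathcal{B}_{\und \lambda, G}^{\sigma}\bigr| \;\le\; \bigl|\mathcal{B}_{\und \lambda, G}\bigr| \;=\; \dim V_{\und \lambda}^G,
\end{equation*}
which is part (1). (An alternative derivation of the inequality avoids the basis: since $\sigma$ has finite order its eigenvalues on $V_{\und\lambda}^G$ are roots of unity, so the real number $\mathrm{trace}(\sigma)$ is at most $\dim V_{\und\lambda}^G$.) For part (2), if $\dim V_{\und \lambda}^G = 1$ then $\mathcal{B}_{\und \lambda, G}$ is a singleton, and its unique element is automatically $\sigma$-fixed; the bijection \eqref{3.10.2015.l} then identifies it with the unique element of $\mathcal{B}_{\und \lambda, G_\sigma}$, forcing $\dim W_{\und \lambda}^{G_\sigma} = 1$.

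For part (3), the summation map \eqref{3.10.2015.h} does all the work. The hypothesis $\dim V_{\und \lambda}^G \ne 0$ means that ${\bf C}_{\und \lambda, G^\vee}$ is nonempty, and applying $\Sigma$ produces a point in $({\bf C}_{c_\sigma \cdot \und \lambda, G^\vee})^{\sigma^\vee}$; under the bijection \eqref{3.10.2015.l} this corresponds to a point in ${\bf C}_{c_\sigma \cdot \und \lambda, (G_\sigma)^\vee}$, and since the latter set parametrizes a basis of $W_{c_\sigma \cdot \und \lambda}^{G_\sigma}$ we conclude $\dim W_{c_\sigma \cdot \und \lambda}^{G_\sigma} \ne 0$. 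There is no genuine obstacle in the proof of this proposition: all three statements are convenient numerical shadows of the tropical/Satake picture, and the real difficulty lies upstream in establishing \eqref{commu.diag.3.10}, the bijection \eqref{3.10.2015.l}, and the summation map \eqref{3.10.2015.h}, rather than in this corollary.
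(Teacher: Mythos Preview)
Your proposal is correct and follows essentially the same route as the paper. The paper's own proof is even terser---it simply cites Theorem~\ref{technical.thm.1} for parts (1) and (2) and Theorem~\ref{technical.thm.2} for part (3)---but unpacking those citations gives exactly the fixed-point counting and summation-map arguments you wrote out; your version is just a more explicit rendering of the same reasoning.
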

\begin{proof}
The first and the second results follow from Theorem \ref{technical.thm.1}. The third result follows from  Theorem \ref{technical.thm.2}.
\end{proof}

There is a conjecture by Fulton asserting that for a triple ${\und \lambda}=(\lambda_1,\lambda_2, \lambda_3)$ of dominant weights of ${\rm GL}_n$, if $\dim V_{\und \lambda}^{{\rm GL}_n}=1$, then $\dim V_{N\und \lambda}^{{\rm GL}_n}=1$  for all $N\in {\Bbb N}$.   The conjecture was proved by Knutson-Tao-Woodward \cite{KTW} using honeycomb models.  Combining it with Proposition  \ref{Numerical_Results} (2) ,  we get the following result. 
\begin{proposition}
Let  $\und \lambda=(\lambda_1,\lambda_2,\lambda_3)$ be a triple of $\sigma$-invariant dominant weights of ${\rm SL}_n$.  If $\dim V_{\und \lambda}^{{\rm SL}_{n}}=1$, then for all $N \in {\Bbb N}$ we have 
\be
\begin{cases}
 \dim (W_{N\und \lambda})^{{\rm Spin}(n+1)}=1   \qquad & \text{if $n$ is even} \\
  \dim (W_{N\und \lambda})^{{\rm Sp}(n-1)}=1   \qquad  & \text{if $n$ is odd}
  \end{cases},
\ee
where ${\rm Sp}(n-1)$ is the Symplectic group. 
\end{proposition}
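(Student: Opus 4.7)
The plan is to chain together two already-available results: Fulton's conjecture (proved by Knutson-Tao-Woodward \cite{KTW}) on the one hand, and Proposition \ref{Numerical_Results}(2) on the other. Starting from the hypothesis $\dim V_{\und\lambda}^{{\rm SL}_n}=1$, I would first apply Fulton's conjecture to promote this to $\dim V_{N\und\lambda}^{{\rm SL}_n}=1$ for every $N\in\N$.

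Next, since each $\lambda_i$ is $\sigma$-invariant, the scaled tuple $N\und\lambda$ is again a triple of $\sigma$-invariant dominant weights, so Proposition \ref{Numerical_Results}(2) applies to $N\und\lambda$ in place of $\und\lambda$. This immediately yields $\dim W_{N\und\lambda}^{G_\sigma}=1$.

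The final step is to identify $G_\sigma$ according to the parity of $n$, using Example \ref{SL_Spin}. If $n=2m$ is even, then ${\rm SL}_n$ has Dynkin diagram of type $A_{2m-1}$, which folds to $B_m$, giving $G_\sigma={\rm Spin}(2m+1)={\rm Spin}(n+1)$. If $n=2m+1$ is odd, then ${\rm SL}_n$ has Dynkin diagram of type $A_{2m}$, which folds to $C_m$, giving $G_\sigma={\rm Sp}(2m)={\rm Sp}(n-1)$. Matching the two cases with the statement completes the proof.

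Since the substantive work is delegated to the two cited results, there is no genuine mathematical obstacle at this stage; the argument is essentially a one-line composition. The only points of care are verifying that $\sigma$-invariance is preserved under integer scaling (immediate, so that Proposition \ref{Numerical_Results}(2) is legitimately applicable to $N\und\lambda$) and that the folding of type $A_{n-1}$ under the nontrivial diagram involution produces the groups as stated, both of which are routine.
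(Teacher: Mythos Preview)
Your proposal is correct and follows exactly the paper's approach: the paper states the proposition as an immediate consequence of combining the Knutson--Tao--Woodward proof of Fulton's conjecture with Proposition~\ref{Numerical_Results}(2), which is precisely the chain you describe. Your identification of $G_\sigma$ via the $A_{n-1}$ folding matches the table in Section~\ref{Notations} and Example~\ref{SL_Spin}.
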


\paragraph{Acknowlegements.} 
We would like to thank  P.\,Belkale, A.\,Goncharov, S.\,Kumar and G.\,Lusztig for their helpful comments and suggestions.  We  would also like to thank the anonymous referee for many helpful suggestions.


\section{Basics of reductive groups}
 Let $G$ be a  connected almost simple group with a Dynkin automorphism $\sigma$.   
In this section, we introduce two different groups $G_\sigma$ and $G^\sigma$ related to $G$.

\subsection{Dynkin automorphisms of $G$}
Let $G$ be a connected almost simple algebraic group over $\C$. Let $T$ be a
maximal torus in $G$ and let $B$ be a Borel subgroup containing $T$. 
Denote by $X^\vee$ and $X$ the lattices of cocharacters and characters of $T$.  We associate a root datum $(X^\vee,X,\alpha_i^\vee, \alpha_i,
 i\in I)$ to $(G,B,T)$ together with a perfect pairing
  $$\langle~,~\rangle:X^\vee\times X\to \Z.$$
  Here $I$ is the index set of simple coroots $\{\alpha_i^\vee\}$ and simple roots $\{\alpha_i\}$. We have the Cartan matrix $(a_{ij}):=\big(\langle\alpha_i^\vee,\alpha_j\rangle\big)$.

  \vskip 3mm

A diagram automorphism $\sigma$ of the root datum  $(X^\vee,X,\alpha_i^\vee, \alpha_i,
 i\in I)$  consists of automorphisms of $X^\vee$ and of $X$, and a permutation of $I$ (without confusion, all of them are denoted by $\sigma $) such that  
\begin{enumerate}
\item  $\langle \sigma (\lambda^\vee),\sigma (\mu)\rangle=\langle\lambda^\vee,\mu\rangle$ for any $\lambda^\vee\in X^\vee$ and $\mu\in X$.
\item  $\sigma(\alpha_i)= \alpha_{\sigma(i)}$ and $\sigma(\alpha_i^\vee)=\alpha_{\sigma(i)}^\vee$.
\end{enumerate}
  
  \vskip 3mm

Let $x_i:\C\to G$ and $y_i: \C\to G$ be root subgroups associated
to the simple roots $\alpha_i$ and $-\alpha_i$. 
The datum $(T,B,x_i,y_i;i\in I)$ is called a pinning of $G$ if it gives rise
to a homomorphism $\gamma_i:{\rm SL}_2\to G$ for each $i\in I$ such that 
\be
\gamma_i(\begin{pmatrix}1 & a \\
0 & 1 \\
\end{pmatrix})=x_i(a), 
\quad
 \gamma_i(\begin{pmatrix}1 & 0 \\
a & 1 \\
\end{pmatrix})=y_i(a), 
\quad
\gamma_i(\begin{pmatrix}a & 0 \\
0 & a^{-1} \\
\end{pmatrix})=\alpha^\vee_i(a).
\ee
Let $\sigma$ be an automorphism of $G$ that preserves $B$ and $T$. It
induces a diagram automorphism of the root datum $(X^\vee,X,\alpha_i,\alpha_i^\vee;i\in I)$, which is still denoted by $\sigma$. We call $\sigma$ a {\it Dynkin automorphism} of $G$ if it  preserves a pinning of $G$, i.e.,
$$\sigma\big(x_i(a)\big)=x_{\sigma(i)}(a), \quad
\sigma\big(y_i(a)\big)=y_{\sigma(i)}(a), \quad \sigma\big(\alpha_i^\vee(a)\big)=\alpha_{\sigma(i)}^\vee(a), \hskip 7mm \forall i\in I.$$

By the isomorphism theorem of the theory of reductive groups (e.g.\cite[Section 9]{Sp}), every diagram automorphism  arises from a Dynkin automorphism of $G$.

 \subsection{The associated group $G_\sigma$} 
 \label{Notations}
Every diagram automorphism $\sigma$ of a root datum $(X^\vee,X,\alpha_i^\vee, \alpha_i,i\in I)$ 
gives rise to the following datum:
\begin{enumerate}
\item
Let $X_\sigma$ be the lattice of $\sigma$-fixed elements in $X$. Let $X_\sigma^\vee :={\rm Hom}_\Z(X_\sigma,\Z)$.
\item
Let $I_\sigma$ be the set of orbits
of $\sigma$ on $I$. For each element $\eta\in I_\sigma$, we set 
$$ \alpha_\eta :=
\begin{cases}
\sum_{i\in
\eta}\alpha_i\quad &\mbox{ if  $a_{ij}=0$ for any two elements $i,j$ in $\eta$}\\
2\sum_{i\in
\eta}\alpha_i\quad &\mbox{ if $\eta=\{i,j\}$ and $a_{ij}=-1$. }
\end{cases}
$$
Note that it covers all  possible cases of $\eta$.
\item The embedding of $X_\sigma$ into $X$ induces a natural map $\theta: X^\vee \to X^\vee_\sigma$. Let $\alpha_\eta^\vee:=\theta(\alpha_i^\vee)$ with $i$ in $\eta$. Clearly $\alpha_\eta^\vee$
does not depend on the choice of $i$.
\end{enumerate}
By \cite[p.29]{Jan} , $(X^{\vee}_\sigma,X_\sigma, \alpha_\eta^\vee,\alpha_\eta, \eta\in I_\sigma)$ is a root datum.
It determines a reductive group $G_\sigma$. 
If $G$ is simply-connected, then so is $G_\sigma$. Here is a table of  $G$ and $G_\sigma$ for nontrivial $\sigma$ (\cite[6.4]{Lu2}):
\begin{enumerate}
\item If $G=A_{2n-1}$ and $\sigma$ is of order $2$, then $G_\sigma=B_n$, $n\geq 2$.
\item If $G=A_{2n}$ and $\sigma$ is of order $2$, then $G_\sigma=C_n$, $n\geq 1$.
\item If $G=D_{n}$ and $\sigma$ is of order $2$, then $G_\sigma=C_{n-1}$, $n\geq 4$.
\item If $G=D_4$ and $\sigma$ is of order $3$, then $G_\sigma=G_2$.
\item If $G=E_6$ and $\sigma$ is of order $2$, then $G_\sigma=F_4$.
\end{enumerate}

\begin{example}
\la{SL_Spin}
Let $G={\rm SL}_{2n}$.  We consider the automorphism 
$$\sigma: G\lra G, \quad g\lms \sigma(g):=w\cdot(g^{t})^{-1} \cdot w^{-1},$$
where $g^t$ is the transposition of $g$ and $w=(w_{ij})$ is a  matrix with entries 
$$w_{ij}:=\begin{cases}
(-1)^{j}   \qquad &\text{ if } i+j=2n+1\\
0    \qquad  &\text{ otherwise }
\end{cases}.$$
The automorphism $\sigma$ is a Dynkin automorphism on $G$.   In this case,  $G_\sigma={\rm Spin}(2n+1)$.
\end{example}






\subsection{The fixed point group $G^\sigma$}
\label{Section_Pinning}
Let us fix a pinning $(T,B,x_i,y_i; i\in I)$ of $G$.
Let $\sigma$ be a Dynkin automorphism  of $G$ that preserves the pinning.  
Let $G^\sigma$  be the identity component of the $\sigma$-fixed points of $G$. Let  $T^\sigma$ and $B^\sigma$ be the identity components of the $\sigma$-fixed points of $T$ and $B$ respectively.

Recall the set $I_\sigma$ of orbits of $\sigma$ on $I$.
For each orbit $\eta\in I_\sigma$, there are two cases:
\begin{itemize}
\item[1.] If $a_{ij}=0$ for any $i,j\in \eta$, then we set
$$
x_{\eta}(a):=\prod_{i\in \eta} x_i(a),\quad y_{\eta}(a):=\prod_{i\in \eta}y_i(a),\quad \alpha^\vee_{\eta}:=\sum_{i\in \eta}\alpha_i^\vee.
$$
\item[2.] If $\eta=\{i,j\}$ and $a_{ij}=-1$, then we set
$$
x_{\eta}(a):=x_{i}(a)x_j(2a)x_i(a),\quad y_{\eta}(a):=y_{i}(\frac{a}{2})y_j(a)y_{i}(\frac{a}{2}),\quad \alpha^\vee_{\eta}:=2(\alpha_i^\vee+\alpha_j^\vee).
$$
\end{itemize}
Note that the definition of $x_\eta$ and $y_\eta$ does not depend on the ordering
of elements in $\eta$.

\begin{lemma}
\la{pinning.g.sigma.s}
The datum $(T^\sigma,B^\sigma,x_{\eta}, y_{\eta}; \eta\in I_{\sigma})$ gives a pinning of $G^\sigma$.
\end{lemma}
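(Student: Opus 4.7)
The plan is to check the three ingredients of a pinning separately: (i) that $T^{\sigma}$ is a maximal torus of $G^{\sigma}$ and $B^{\sigma}$ is a Borel subgroup containing $T^{\sigma}$; (ii) that each $x_{\eta}$ and $y_{\eta}$ lies in $G^{\sigma}$ and is a one-parameter root subgroup for the simple roots $\pm\alpha_{\eta}$ of $G^{\sigma}$ relative to $(B^{\sigma},T^{\sigma})$; and (iii) that for each $\eta$ the three one-parameter data $(x_{\eta},y_{\eta},\alpha_{\eta}^{\vee})$ glue into a homomorphism $\mathrm{SL}_{2}\to G^{\sigma}$ via the usual matrices. Item (i) is Steinberg's theorem on connected fixed-point groups of semisimple automorphisms of reductive groups: since $\sigma$ preserves $(T,B)$, the groups $T^{\sigma}$ and $B^{\sigma}$ are a maximal torus and a Borel subgroup of $G^{\sigma}$, and the simple roots of $G^{\sigma}$ with respect to $(T^{\sigma},B^{\sigma})$ are exactly the restrictions $\alpha_{\eta}|_{T^{\sigma}}$ described in Section~\ref{Notations} (compare \cite[p.\,29]{Jan}).

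For item (ii) the content is in showing that the explicit formulas for $x_{\eta}$ and $y_{\eta}$ actually land in $G^{\sigma}$. In the orthogonal case $a_{ij}=0$ for all $i,j\in\eta$ the root subgroups $\{x_{i}\}_{i\in\eta}$ pairwise commute, so $x_{\eta}(a)=\prod_{i\in\eta}x_{i}(a)$ makes unambiguous sense and is manifestly fixed by $\sigma$ because $\sigma$ permutes the factors; the same holds for $y_{\eta}$. The delicate case is $\eta=\{i,j\}$ with $a_{ij}=-1$, where $\sigma$ swaps $i$ and $j$ but the roots are not orthogonal. Here the key verification is that
\[
\sigma\bigl(x_{i}(a)x_{j}(2a)x_{i}(a)\bigr)=x_{j}(a)x_{i}(2a)x_{j}(a)=x_{i}(a)x_{j}(2a)x_{i}(a).
\]
The second equality is a rank-two identity: inside the connected subgroup of $G$ generated by the root subgroups attached to $\eta$, which is of type $A_{2}$, one computes in $\mathrm{SL}_{3}$ (or via the Chevalley commutator relation $[x_{i}(s),x_{j}(t)]=x_{i+j}(\pm st)$) that both products equal the same unipotent element with the prescribed coordinates in $x_{i+j}$. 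The analogous identity for $y_{\eta}$ follows from the same computation applied to the opposite unipotent radical, with the factor $1/2$ chosen exactly to match the transpose relation. This rank-two identity is the main obstacle: it is the only place where one really uses the specific combinatorics of the coefficients $1,2,1$ and $\tfrac{1}{2},1,\tfrac{1}{2}$, and it fixes the choice of $\alpha_{\eta}^{\vee}=2(\alpha_{i}^{\vee}+\alpha_{j}^{\vee})$.

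Once $x_{\eta},y_{\eta}\in G^{\sigma}$ are established, one identifies them with the root subgroups of $G^{\sigma}$ attached to $\pm\alpha_{\eta}$. Conjugating $x_{\eta}(a)$ by $t\in T^{\sigma}$ and using $t\,x_{i}(a)\,t^{-1}=x_{i}(\alpha_{i}(t)a)$ together with $\sigma$-invariance of $t$ shows $t\,x_{\eta}(a)\,t^{-1}=x_{\eta}(\alpha_{\eta}(t)a)$ in both cases; the factor $2$ in Case~2 is absorbed into the doubling in the definition of $\alpha_{\eta}$. This confirms $x_{\eta}$ is the root subgroup for $\alpha_{\eta}$ (and similarly $y_{\eta}$ for $-\alpha_{\eta}$).

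For item (iii) it remains to check the $\mathrm{SL}_{2}$-relations: additivity $x_{\eta}(a)x_{\eta}(b)=x_{\eta}(a+b)$, the analogous relation for $y_{\eta}$, and the Steinberg relation expressing $\gamma_{\eta}\bigl(\begin{smallmatrix}a&0\\0&a^{-1}\end{smallmatrix}\bigr)=\alpha_{\eta}^{\vee}(a)$. In Case~1 additivity is immediate from commutativity; in Case~2 it is again a rank-two $A_{2}$ computation that reduces to the same commutator identity used above, and produces exactly the doubled cocharacter $2(\alpha_{i}^{\vee}+\alpha_{j}^{\vee})=\alpha_{\eta}^{\vee}$ when one evaluates the standard $\mathrm{SL}_{2}$-identity $\bigl(\begin{smallmatrix}1&a\\0&1\end{smallmatrix}\bigr)\bigl(\begin{smallmatrix}1&0\\-a^{-1}&1\end{smallmatrix}\bigr)\bigl(\begin{smallmatrix}1&a\\0&1\end{smallmatrix}\bigr)=\bigl(\begin{smallmatrix}0&a\\-a^{-1}&0\end{smallmatrix}\bigr)$. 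Combined with (i) and (ii), this shows that $(T^{\sigma},B^{\sigma},x_{\eta},y_{\eta};\eta\in I_{\sigma})$ satisfies all the axioms of a pinning of $G^{\sigma}$.
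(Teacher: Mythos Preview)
Your approach is the same as the paper's: the proof there reads in full ``The first case is clear. The second case is due to a computation of ${\rm SL}_3$,'' and you have supplied exactly that computation. The identity $x_j(a)x_i(2a)x_j(a)=x_i(a)x_j(2a)x_i(a)$ in type $A_2$ (and its $y$-analogue) is the heart of the matter, and your verification of it is correct.

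There is one genuine slip. You identify the simple roots of $G^\sigma$ with the restrictions $\alpha_\eta|_{T^\sigma}$ where $\alpha_\eta$ is taken from Section~\ref{Notations}, and then assert $t\,x_\eta(a)\,t^{-1}=x_\eta(\alpha_\eta(t)a)$ with the comment that ``the factor $2$ in Case~2 is absorbed into the doubling in the definition of $\alpha_\eta$.'' But the $\alpha_\eta$ of Section~\ref{Notations} are the simple roots of the \emph{associated} group $G_\sigma$, not of the fixed-point group $G^\sigma$; these two groups are Langlands dual to one another (Remark~\ref{remark.3.13.2015s}), not isomorphic. The simple root of $G^\sigma$ attached to the orbit $\eta$ is the common restriction $\alpha_i|_{T^\sigma}$ for any $i\in\eta$, and conjugating each factor of $x_\eta(a)$ separately gives
\[
t\,x_\eta(a)\,t^{-1}=x_\eta\bigl(\alpha_i(t)\,a\bigr)
\]
in both cases, with no doubling to absorb. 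In Case~2 your formula would instead scale $a$ by $\alpha_\eta(t)=\alpha_i(t)^2\alpha_j(t)^2=\alpha_i(t)^4$ on $T^\sigma$, which is false. The doubling that \emph{does} occur is on the coroot side, where $\alpha_\eta^\vee=2(\alpha_i^\vee+\alpha_j^\vee)$ genuinely appears in the ${\rm SL}_2$-verification of item~(iii); so your item~(iii) is fine, but the root identification in item~(ii) needs to be corrected.
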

\begin{proof} The first case is clear. The second case is due to a computation of  ${\rm SL}_3$.
\end{proof}

\begin{remark}
\la{remark.3.13.2015s}
 Let $G^\vee$ be the Langlands dual group of $G$. 
 By considering the diagram automorphism $\sigma$ on the dual root datum of $(X^\vee,X,\alpha_i^\vee, \alpha_i;
 i\in I)$, we get an Dynkin automorphism $\sigma^\vee$ of $G^\vee$.
 Let $(G^\vee)^{\sigma^\vee}$ be the  the identity component of the $\sigma^\vee$-fixed points of $G^\vee$. Note that the cocharacters of $(G^\vee)^{\sigma^\vee}$ are identified with the $\sigma^\vee$-invariant cocharacters of $G^\vee$. So $(G^\vee)^{\sigma^\vee}$ is the Langlands dual group of  $G_\sigma$ (see \cite{KLP}). 
\end{remark}

\paragraph{Weyl groups of $G$ and $G^\sigma$.}
Let $s_i~(i\in I)$ be the simple reflections generating the Weyl group $W$ of $G$. Set $\overline{s}_i:= y_i(1)x_i(-1)y_i(1)$. The elements $\overline{s}_i$ satisfy the braid relations. So we can associate to each $w \in W$ its representative $\overline{w}$ in such a way that for any reduced decomposition $w= s_{i_1}\cdots s_{i_k}$ one has $\overline{w}=\overline{s}_{i_1}\cdots \overline{s}_{i_k}.$
Let $w_0$ be the longest element of the Weyl group. Set $s_G:= \overline{w}_0^2$. Note that $s_G$ is a central element in $G$. Moreover $s_G^2=1$.

The Weyl group $W^\sigma$ of $G^\sigma$ can be naturally embedded into $W$ with generators
\be
\la{11.55.3.16.14h}
s_{\eta}=\begin{cases}\prod_{i\in \eta}s_i,\quad &\mbox{if }a_{ij}=0, ~\forall i,j\in \eta;\\
s_is_js_i, &\mbox{if }\eta=\{i,j\},~a_{ij}=-1.
\end{cases}
\ee
The longest element $w_0$ of $W$ coincides with the longest element of $W^\sigma$. We state the following well-known fact for future use.

\bl 
\la{deco.w0}
Each reduced decomposition $s_{\eta_1}\cdots s_{\eta_m}$ of $w_0$ in $W^\sigma$ determines a reduced decomposition of $w_0$ in $W$ with $s_{\eta_i}$ expressed by \eqref{11.55.3.16.14h}, once we fix an ordering of elements in each $\eta$. 
\el
\begin{example} If the pair $(G, G^\sigma)$ is of Cartan-Killing type $(A_4, B_2)$, then 
$$
w_0=s_{\eta_1}s_{\eta_2}s_{\eta_1}s_{\eta_2}=s_{1}s_4\cdot s_2s_3s_2\cdot s_1s_4\cdot s_2s_3s_2.
$$
\end{example}

We set $\hat{s}_\eta:=y_\eta(1)x_\eta(-1)y_\eta(1)$ for $\eta\in I_\sigma$. There is another representative $\overline{s}_\eta$ of $s_{\eta}$ obtained by its decomposition in $W$. A direct calculation shows that $\hat{s}_{\eta}=h_{\eta}\overline{s}_{\eta}$, where $h_\eta=\alpha_i^{\vee}(2)\alpha_j^\vee(2)$ if $\eta=\{i,j\}, a_{ij}=-1$, and $h_\eta=1$ otherwise.
We associate to $w_0$ a representative $\hat{w}_0$ via a reduced decomposition of $w_0$ in $W^\sigma$. Then
\be
\la{12.25.3.16.2014s}
\hat{w}_0=h\cdot \overline{w}_0, \quad \mbox{ where } 
h:=\begin{cases}
1  \quad & \text{ if } G\not = A_{2n}\\
\prod_{k=1}^n(\alpha_k^\vee(2)\alpha_{2n+1-k}^\vee(2))^k   \quad & \text{ if } G=A_{2n} .
\end{cases} 
\ee
Note that $s_{G}=s_{G^\sigma}:=\hat{w}_0^2$.

\section{Configuration space of decorated flags and its tropicalization}
\label{Configuration_Tropicalization_Section}
In this section, let us assume that $G$ is defined over $\Q$.  Let us fix a pinning $(T, B, x_i, y_i; i\in I)$ of $G$.
Let $\sigma$ be a Dynkin automorphism of $G$ that preserves the pinning.

\subsection{Positive spaces and their tropical points}
Below we briefly introduce the category of positive spaces and the  
tropicalization functor.

\paragraph{Positive spaces.}
A positive rational function on a split algebraic torus ${\cal T}$ is a nonzero rational function on ${\cal T}$ which in a coordinate system, given by a set of characters of ${\cal T}$, can be presented as a ratio of two polynomials with positive integral coefficients. Denote by $\Q_+({\cal T})$ the set of positive functions on $ {\cal T}$.

A positive structure on an irreducible space (i.e. variety / stack) ${\cal Y}$ is a birational map 
$\gamma$ from ${\cal T}$ to ${\cal Y}$. A rational function
$f$ on ${\cal Y}$ is called positive if $f\circ \gamma \in \Q_{+}({\cal T})$.
Denote by $\Q_{+}({\cal Y})$ the set of positive functions on ${\cal Y}$.
Two positive structures on ${\cal Y}$ are equivalent if they determine the
same set $\Q_+({\cal Y})$. Such a pair $({\cal Y}, \Q_{+}({\cal Y}))$ is
called a positive space. 

Let $({\cal Y}, \Q_{+}({\cal Y}))$ and $({\cal Z}, \Q_+({\cal Z}))$ be a pair of positive spaces. A rational map $\phi: {\cal Y}\ra {\cal
Z}$ is called a positive map if $f\circ \phi \in \Q_{+}({\cal Y})$ for all $f \in \Q_{+}({\cal Z})$.

\paragraph{Tropicalization.}
Let $\big({\cal Y},\Q_+({\cal Y})\big)$ be a positive space. A tropical point of ${\cal Y}$ is a map $l: \Q_+({\cal Y})\ra\Z$ such that
$$
\forall f, g \in \Q_+({\cal Y}), \quad l(f+g)=\min\{l(f), l(g)\},  \quad l(fg)=l(f)+l(g).
$$
Denote by ${\cal Y}(\Z^t)$ the set of tropical points of ${\cal Y}$. 
Tautologically, each $f\in \Q_+({\cal Y})$ determines a $\Z$-valued function $f^t$ of ${\cal Y}(\Z^t)$ such that $f^t(l):=l(f)$. 

The following Lemma is an easy exercise.
\bl
Let $\phi: {\cal Y}\ra {\cal Z}$ be a positive map. There exists a unique map $\phi^t: {\cal Y}(\Z^t)\ra {\cal Z}(\Z^t)$, called the tropicalization of $\phi$,  such that $(f\circ \phi)^t=f^t\circ \phi^t$ for all $f\in \Q_+({\cal Z})$. 
\el

The following lemma is standard. It shows that the tropicalization is a functor from the category of positive spaces to the category of the sets of tropical points.
\begin{lemma}
\label{tropical_composition}
Let $\phi:\cal X\to \cal Y$ and $\psi:\cal Y\to \cal Z$ be two positive maps. Then $(\psi\circ \phi)^t=\psi^t\circ \phi^t$.
\end{lemma}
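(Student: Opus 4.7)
The plan is to deduce this functoriality directly from the uniqueness clause in the preceding lemma. That lemma asserts that, for any positive map $\phi$, there is a \emph{unique} map at the tropical level satisfying the compatibility $(f\circ \phi)^t = f^t\circ \phi^t$ for every positive function $f$ on the target. So to identify $(\psi\circ\phi)^t$ with $\psi^t\circ\phi^t$, it suffices to verify that $\psi^t\circ\phi^t$ satisfies the universal property that characterizes $(\psi\circ\phi)^t$.

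Concretely, I would first observe that $\psi\circ\phi$ is itself a positive map from $\cal X$ to $\cal Z$: given any $f\in \Q_+(\cal Z)$, positivity of $\psi$ gives $f\circ\psi\in \Q_+(\cal Y)$, and then positivity of $\phi$ gives $(f\circ\psi)\circ\phi\in \Q_+(\cal X)$. Hence the tropicalization $(\psi\circ\phi)^t\colon \cal X(\Z^t)\to \cal Z(\Z^t)$ is defined by the previous lemma.

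Next, for any $f\in \Q_+(\cal Z)$, I would compute
\[
\bigl(f\circ(\psi\circ\phi)\bigr)^t \;=\; \bigl((f\circ \psi)\circ \phi\bigr)^t
\;=\; (f\circ \psi)^t\circ \phi^t
\;=\; (f^t\circ \psi^t)\circ \phi^t
\;=\; f^t\circ (\psi^t\circ \phi^t),
\]
where the first equality is associativity of composition of maps, the second applies the defining property of $\phi^t$ to the positive function $f\circ \psi\in \Q_+(\cal Y)$, and the third applies the defining property of $\psi^t$ to $f\in \Q_+(\cal Z)$. Thus the map $\psi^t\circ \phi^t\colon \cal X(\Z^t)\to \cal Z(\Z^t)$ satisfies the compatibility condition that uniquely characterizes $(\psi\circ\phi)^t$.

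Invoking the uniqueness clause of the previous lemma then yields $(\psi\circ\phi)^t=\psi^t\circ\phi^t$. There is no real obstacle here; the only subtle point is checking that the hypotheses required to invoke the previous lemma are available at each step, namely the positivity of $\psi\circ\phi$ and of $f\circ \psi$, both of which follow tautologically from the definition of a positive map.
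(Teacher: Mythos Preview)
Your argument is correct: you verify that $\psi\circ\phi$ is positive, then check that $\psi^t\circ\phi^t$ satisfies the defining compatibility $(f\circ(\psi\circ\phi))^t = f^t\circ(\psi^t\circ\phi^t)$ for all $f\in\Q_+({\cal Z})$, and conclude by the uniqueness clause of the preceding lemma. The paper itself gives no proof, simply declaring the lemma ``standard''; your write-up is exactly the natural unpacking of that word.
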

 
 Let $f, g\in {\Bbb Q}_+({\cal Y})$. We say $f<g$ if $g-f$ is still a positive function on ${\cal
Y}$.
\bl \la{com.pos.2.13.slh}
Let $f, g\in {\Bbb Q}_+({\cal Y})$. If there exists a positive integer $N$
such that $f<g< Nf$, then $f^t=g^t$.
\el
\begin{proof} If $h:=g-f\in {\Q}_+({\cal Y})$, then 
$g^t=\min\{h^t, f^t\}\leq f^t.$
Therefore $(Nf)^t\leq g^t\leq f^t$. Note that $(Nf)^t=f^t$. Therefore $g^t=f^t$.
\end{proof}

\begin{example}  \la{example.lattice.tropical.pts}
Denote by $X_\ast({\cal T})$ and $X^\ast({\cal T})$ the lattices of cocharacters and characters of  a split algebraic torus ${\cal T}$. There is a perfect pairing 
$$\langle~,~\rangle:  X_\ast({\cal T})\times X^\ast({\cal T})\ra \Z.$$ 
Each $f\in \Q_+({\cal T})$ can be presented as
$$
f=\frac{\sum_{\alpha\in X^*({\cal T})}c_{\alpha} X^\alpha}{\sum_{\alpha\in
X^*({\cal T})}d_{\alpha}X^\alpha},\quad c_{\alpha}, d_{\alpha}\in {\Bbb
N}=\{0,1,2,\cdots\}.
$$
Here $X^\alpha$ is the regular function on $\cal T$ associated to $\alpha$ and $c_\alpha, d_\alpha$ are zero for all but finitely many $\alpha$.   
Each cocharacter $l\in X_*({\cal T})$ determines a tropical point of ${\cal T}$ such that
$$
l(f):=\min_{\alpha~|~c_\alpha \neq 0} \langle l,\alpha\rangle - \min_{\alpha~|~d_\alpha \neq 0} \langle l,\alpha\rangle.
$$
It is easy to show that all tropical points of ${\cal T}$  can be defined this way.
Therefore the set ${\cal T}(\Z^t)$ is canonically identified with $X_*({\cal T})$. We treat them as the same set in this paper.
\end{example}

\bl
\la{convex}
If $f\in \Q_+({\cal T})$ is a regular function\footnote{For example, $f=\frac{1+X^3}{1+X}=1-X+X^2$ is such a function on ${\Bbb G}_m$.} on ${\cal T}$, then $f^t$ is convex, i.e.,
$$
f^t(l_1+l_2)\geq f^t(l_1)+f^t(l_2),\quad \forall l_1, l_2\in X_*({\cal T}).
$$
\el
\begin{proof} The function $f$ is a Laurent polynomial on ${\cal T}$:
$$
f=\sum_{\alpha \in X^*({\cal T})}c_{\alpha}X^\alpha,\quad c_\alpha \in \Z.
$$
It is easy to show that $f^t(l)=\min_{\alpha|c_\alpha \neq 0} \langle l,\alpha\rangle$. The convexity follows.
\end{proof}

\bl
\la{linear}
Let $\phi: {\cal T}_1\ra {\cal T}_2$ be a positive map between two split algebraic tori. If $\phi$ is regular, then $\phi^t: X_{*}({\cal T}_1)\ra X_{*}({\cal T}_2)$ is linear. 
\el
\begin{proof}
Let us write the map $\phi$ in coordinates:
$$
\phi: {\cal T}_1\lra {\cal T}_2, \quad x:=(x_1, \ldots, x_n)\lms (\phi_1(x), \ldots, \phi_m(x)).
$$
If $\phi$ is a regular map, then every $\phi_i(x)$ is invertible. Therefore $\phi_i(x)$ must be monomials of $x_1,\ldots, x_n$ with nontrivial coefficients. So its tropicalization is linear. 
\end{proof}

\vskip 3mm

If the space ${\cal Y}$ admits a positive structure defined by a birational map $\gamma: {\cal T}\ra {\cal Y}$, then $\gamma^t$ is a bijection from ${\cal T}(\Z^t)$ to ${\cal Y}(\Z^t)$.
For $l\in {\cal Y}(\Z^t)$, its pre-image $\beta(l):=(\gamma^t)^{-1}(l)$ is called the coordinate of $l$ in ${\cal T}(\Z^t)$. Note that ${\cal T}(\Z^t)=X_*({\cal T})$ is an abelian group, it induces an extra operation $+_{\gamma}$ on ${\cal Y}(\Z^t)$ such that
\be
\la{2.9.3.20.14s}
\beta(l+_{\gamma}l')=\beta(l)+\beta(l'),\quad l,l'\in {\cal Y}(\Z^t).
\ee

\subsection{Lusztig's positive atlas of $U_\ast$}
\la{sec.lus.pos.atlas.sl}
Let $U=[B,B]$ be the maximal unipotent subgroup inside $B$. Let $B^-$ be the Borel subgroup such that $B\cap B^-=T$. Let $U_{\ast}=U\cap B^-w_0 B^-$.

Let $w_0=s_{i_1}s_{i_2}\ldots s_{i_N}$ be a reduced decomposition in $W$. The sequence  ${\bf i}=(i_1,\ldots,i_N)$ is called a reduced word for $w_0$ in $W$.  There is an open embedding
\be
\la{lustig.pos.chart.ss}
\gamma_{\bf i}: {\Bbb G}_m^N \lhook\joinrel\longrightarrow U_{\ast}, ~~~(a_1,\ldots, a_N)\lms x_{i_1}(a_1)\ldots
x_{i_N}(a_N).
\ee
The birational map $\gamma_{\bf i}$ defines a positive structure of $U_\ast$. It is shown in \cite{Lu1} that all the reduced words for $w_0$ give rise to the equivalent positive structures on $U_{\ast}$, which we call Lusztig's positive atlas.

\vskip 3mm

Note that the Dynkin automorphism $\sigma$ preserves $B$ and $B^-$. So it preserves $U_\ast$.
\bl \la{pos.inv.u}
The automorphism $\sigma: U_{\ast}{\lra} U_{\ast}$ is a positive map. 
\el
\begin{proof}
Let ${\bf i}=(i_1,\ldots, i_N)$ be a reduced word for $w_0$. Then $\sigma({\bf
i})=(\sigma(i_1),\ldots, \sigma(i_N))$ is also a reduced word for $w_0$.
For each $u=x_{i_1}(a_1)\ldots x_{i_N}(a_N)\in U_{\ast}$, we have
$$
\sigma(u)=x_{\sigma(i_1)}(a_1)\ldots x_{\sigma(i_N)}(a_N)\in U_{\ast}.
$$
Since the positive structures given by ${\bf i}$ and $\sigma({\bf i})$ are equivalent, the Lemma follows.
\end{proof}

The tropicalization of $\sigma$ is a bijection 
\be
\sigma^t: U_\ast(\Z^t)\stackrel{\sim}{\lra}U_\ast(\Z^t).
\ee 
Denote by $\big(U_\ast(\Z^t)\big)^\sigma$ the set of $\sigma^t$-fixed points. Below we give a characterization of the $\sigma^t$-fixed points.

Let ${\bf j}=(\eta_1, \ldots, \eta_n)$ be a reduced word for $w_0$ in $W^\sigma$. 
It determines a reduced word ${\bf i}=(i_1,i_2,\ldots, i_N)$ for $w_0$ in $W$ (Lemma \ref{deco.w0}).
The tropicalization of \eqref{lustig.pos.chart.ss} is a bijection
$$
\gamma^t_{{\bf i}}: \Z^N\stackrel{=}{\lra}U_{\ast}(\Z^t).
$$
Denote by $(m_1, \ldots, m_N)$ the pre-image of $l\in U_{\ast}(\Z^t)$ in $\Z^N$, which is called the tropical coordinate of $l$ provided by $\gamma_{{\bf i}}$. 

The following lemma is a manifestation of Proposition 3.5 in \cite{H}.
\bl
\la{u.inv.3.18.s}
A tropical point $l$ is $\sigma^t$-invariant if and only if 
$$m_1=m_2=\ldots=m_{r_{\eta_1}}, \quad m_{r_{\eta_1}+1}=m_{r_{\eta_1}+2}=\ldots=m_{r_{\eta_1}+r_{\eta_2}}, \quad \ldots,$$ 
where $r_{\eta}$ is the cardinality of the orbit $\eta$.
\el

\begin{proof}
First we prove the case when $G$ is of type $A_2$ and $\sigma$ is of order 2. In this case, the set $I=\{1,2\}$, and $\sigma(1)=2$, $\sigma(2)=1$. So ${\bf i}=(1,2,1)$ is a reduced word of $w_0$ in $W$.
If $u=x_1(a)x_2(b)x_1(c)$, then
$$
\sigma(u)=x_2(a)x_1(b)x_2(c)=x_1(\frac{bc}{a+c})x_2(a+c)x_1(\frac{ab}{a+c}).
$$ 
Let $(m_1,m_2,m_3)$ be the coordinate of $l$. So the coordinate  of $\sigma^t(l)$ is
\be
\la{3.18.2014.a2}
(m_2+m_3-\min\{m_1,m_3\}, \min\{m_1,m_3\}, m_1+m_2-\min\{m_1,m_3\}).
\ee
Note that $l=\sigma^t(l)$ if and only if $m_1=m_2=m_3$. The Lemma follows.

The general case can be reduced to the above case and the case when $G$ is of type $A_1\times \cdots \times A_1$. The latter case follows by a similar but easier argument.
\end{proof}


Let $U^\sigma$ be the identity component of the $\sigma$-fixed points of $U$.
The reduced word ${\bf j}$ of $w_0$ in $W^\sigma$ determines a positive structure
of $U^\sigma_\ast$:
$$
\gamma_{\bf j}: {\Bbb G}_m^n \lhook\joinrel\longrightarrow U_{\ast}^\sigma,
~~~(a_1,\ldots, a_n)\lms x_{\eta_1}(a_1)\ldots
x_{\eta_n}(a_n).
$$

\bl
\la{3.21.3.19.s}
The natural embedding $\imath: U_\ast^\sigma \hookrightarrow U_{\ast}$ is
a positive map.
The tropicalization of $\imath$ identifies the set $U_\ast^\sigma(\Z^t)$
with $(U_\ast(\Z^t))^\sigma$.
\el
\begin{proof}
Recall the construction of the pinning of $G^\sigma$ in Section \ref{Section_Pinning}. It provides an explicit
expression of the map $\imath$  using the coordinates provided by $\gamma_{\bf
j}$ and $\gamma_{\bf i}$. Then the positivity of $\imath$ is clear. Then
second part follows directly from Lemma \ref{u.inv.3.18.s}.
\end{proof}

\paragraph{The additive Whittaker character $\chi$.}
The pinning of $G$ determines an additive character $\chi$ of $U$ such that  
\be
\chi(x_i(a))=a, \quad  \forall i\in I; \hskip 7mm \chi(u_1u_2)=\chi(u_1)+\chi(u_2), \quad \forall u_1, u_2\in U.
\ee 
The following Lemma is clear.
\bl
\la{21.31.3.16.14s}
The restriction of $\chi$ on $U_\ast$ is a positive function. The function $\chi$ is invariant under the automorphism $\sigma$, i.e., $\chi\circ\sigma =\chi$.
\el
Denote by $\chi_\sigma$ the additive Whittaker character of $U^\sigma$ such that  $\chi_\sigma(x_\eta(a))=a$ for $\eta\in I_\sigma$, and $\chi_\sigma(u_1u_2)=\chi_\sigma(u_1)+\chi_\sigma(u_2)$ for $u_1, u_2\in U^\sigma$. The restriction of $\chi_\sigma$ on $U_\ast^\sigma$ is a positive function.
 
\bl  \la{12.1.3.17.14h}
We have $\chi_\sigma^t=\chi^t \circ \imath^t.$
\el
\begin{proof}
It is easy to check that  $\chi\circ \imath (x_\eta(a))=\kappa_\eta  \chi_\sigma(x_\eta(a))$, where 
$$\kappa_\eta=\begin{cases}
1 \quad \text{ if } \eta=\{i\}.\\
2 \quad \text{ if }  \eta=\{i,j\}, \text{ and } a_{ij}=0 .\\
3  \quad \text{ if } \eta=\{i,j,k\},  \text{ and } a_{ij}=a_{jk}=a_{ik}=0.     \\
4  \quad  \text{ if } \eta=\{i,j\} , \text{ and } a_{ij}=-1.
 \end{cases} $$
Hence in any case, $\chi_\sigma\leq \chi\circ \imath\leq 4\chi_\sigma$. By 
 Lemma \ref{com.pos.2.13.slh}, $\chi_\sigma^t=(\chi\circ \imath)^t$. By Lemma \ref{tropical_composition}, $(\chi\circ \imath)^t=\chi^t\circ\imath^t$. It concludes the proof of our lemma.
\end{proof}

\subsection{Configuration space of decorated flags}
Let ${\cal A}:=G/U$. The elements of ${\cal A}$ are called decorated flags.
The group $G$ acts on ${\cal A}$ on the left. For each $A\in {\cal A}$, the stabilizer $stab_{G}(A)$ is a maximal unipotent subgroup of $G$. 
Let $\pi$ be the natural projection  from ${\cal A}$ to the flag variety ${\cal B}$ such that $\pi(A)$ is the Borel subgroup containing $stab_G(A)$. 
It is easy to show that for each $B\in {\cal B}$, its fiber $\pi^{-1}(B)$ is a $T$-torsor. 

We consider the {\it configuration space} 
\be 
{\rm Conf}_n({\cal A}):=G\backslash {\cal A}^n.
\ee
We say a pair $(B_1, B_2)\in {\cal B}^2$ is {\it generic} if  $B_1\cap B_2$ is a maximal torus of $G$. 
We consider the following open subspace of ${\rm Conf}_n({\cal A})$:
\be
{\rm Conf}_n^\times ({\cal A}):=\{G\backslash (A_1,\ldots, A_n)~|~ (\pi(A_i), \pi(A_{i+1}))\mbox{ is generic for each }  i\in \Z/n\}.
\ee
Below we introduce a positive structure on ${\rm Conf}_{n}^\times({\cal
A})$, following \cite[Section 8]{FG}. We also refer the readers to \cite[Section 6]{GS} for more details. 

\vskip 2mm
We consider the space
\be
{\cal R}:=G\backslash\{(B_1, A, B_2)~|~( \pi(A), B_1), (\pi(A), B_2) \mbox{ are generic} \}\subset G\backslash ({\cal A}\times {\cal B}^2).
\ee
Denote by ${\cal R}_\ast$ the open subspace of ${\cal R}$ with requiring the pair $(B_1, B_2)$ is also generic.
Abusing notation, denote by $U$ the decorated flag corresponding to the  coset of the identity in ${\cal A}$. 

\bl[{\cite[Section 8]{FG}}] 
\la{basic.inv.ed}
There is an isomorphism 
${\bf ed}: {\rm Conf}_2^\times ({\cal A})\stackrel{\sim}{\ra}T$ such that
$$
(A_1, A_2)=(U, {\bf ed}(A_1, A_2)\overline{w}_0\cdot U).
$$
There is an isomorphism 
${\bf an}: {\cal R}\stackrel{\sim}{\ra}U$ such that
$$
(B_1,A,B_2)=(B^-, U, {\bf an}(B_1, A, B_2)\cdot B^-).
$$
The restriction of ${\bf an}$ on ${\cal R}_\ast$ is an isomorphism from ${\cal R}_\ast$ to $U_\ast$.
\el

\begin{figure}[h] 
   \centering
   \includegraphics[width=3in]{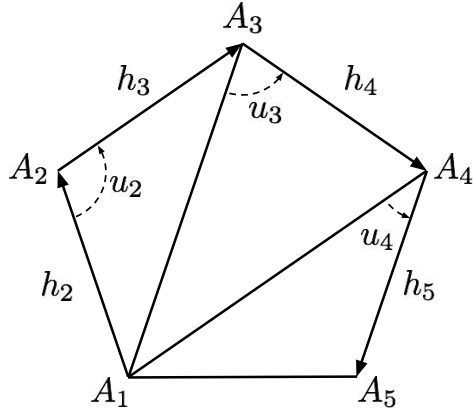} 
   \caption{The invariants assigned to ${\rm Conf}_5^\times({\cal A})$.}
   \la{invaria}
\end{figure}

\blc[{\cite[Section 8]{FG}}] 
\la{le.pos.p2}
There is a natural open embedding\footnote{In fact, the images of $p$ consist of configurations $(A_1,\ldots, A_n)\in {\rm Conf}_{n}^\times ({\cal A})$ such that the pairs $(\pi(A_1), \pi(A_i))$ are also generic for $i=2, \ldots, n$.}
\be
\la{pos.p.2}
p: T^{n-1}\times U_\ast^{n-2} \lhook\joinrel\longrightarrow {\rm Conf}_{n}^\times ({\cal A}),~ (h_2, \ldots, h_n, u_2, \ldots, u_{n-1})\lms (A_1, \ldots, A_{n})
\ee
such that
\begin{itemize}
\item $h_i={\bf ed}(A_{i-1}, A_{i}),~i\in \{2,\ldots,n\};$
\item $u_j={\bf an}(\pi(A_1), A_{j}, \pi(A_{j+1})),~j\in \{2,\ldots, n-1\}.$
\end{itemize}
Let $P_n$ be a convex $n$-gon. Let us assign to each vertex of $P_n$ a decorated flag $A_i$ so that $A_1,\ldots, A_n$ sit clockwise in  the polygon. Then $h_i$, $u_j$ are variables assigned to the edges and angles of $P_n$. See Figure \ref{invaria}.

The positive structure on $U_{*}$ is defined via  Lusztig's atlas. Note that $T$ is a split algebraic group and therefore admits a natural positive structure. So $T^{n-1}\times U_\ast^{n-2}$ admits a positive structure.
From now on, we fix a positive structure on ${\rm Conf}_{n}^\times ({\cal A})$ such that the map $p$ and its inverse $p^{-1}$ are both positive maps. 
\elc

Fock and Goncharov {\cite[Definition 2.5]{FG}} defined the {\it twisted cyclic shift map} 
\be
\la{pos.rot}
r: {\rm Conf}_{n}^\times ({\cal A})\lra {\rm Conf}_{n}^\times ({\cal A}),\quad (A_1,\ldots,
A_{n})\lms (s_{G}\cdot A_n,A_1, \ldots, A_{n-1}).
\ee
They showed that
\bt[{\cite[Corollary 8.1]{FG}}] 
\la{pos.rot.inv}
The twisted cyclic shift map \eqref{pos.rot} is a positive map.
\et

\bc The following map is a regular positive map
\be
{\bf Ed}: {\rm Conf}_{n}^\times ({\cal A})\lra T^n,
\ee
$$
(A_1,\ldots, A_n) \lms \big({\bf ed}(s_G\cdot A_n, A_1), {\bf ed}(A_1, A_2),\ldots, {\bf ed}(A_{n-1}, A_n)\big).
$$
\ec
\begin{proof}
The positivity of the first factor follows from Theorem \ref{pos.rot.inv}. The rest are clear.
\end{proof}

Recall the additive Whittaker character $\chi$ of $U$ in Section \ref{sec.lus.pos.atlas.sl}.
\bd[{\cite[Section 2.1.4]{GS}}] 
The {\it potential} ${\cal W}$  is a regular function of ${\rm Conf}_n^\times({\cal A})$ such that
$$
{\cal W}(A_1,\ldots, A_n):=\sum_{i\in \Z/n}\chi\big({\bf an}(\pi(A_{i-1}), A_{i}, \pi(A_{i+1}))\big).
$$
\ed
\bc
The potential ${\cal W}$ is a positive function. 
\ec
\begin{proof} 
Note that ${\bf an}(\pi(A_{1}), A_{2}, \pi(A_{3})\big)$ is a part of the map \eqref{pos.p.2}.
By Lemma \ref{21.31.3.16.14s}, $\chi$ is a positive function on $U_\ast$. 
So the summand $\chi\big({\bf an}(\pi(A_{1}), A_{2}, \pi(A_{3})\big)$ is a positive function. The central element $s_G$ is contained in the intersection of Borel subgroups. Therefore
$$
{\bf an}(B_1,s_G \cdot A, B_2 )={\bf an}(B_1, A, B_2).
$$
Using Theorem \ref{pos.rot.inv}, the rest summands are positive functions.
\end{proof}


\subsection{The automorphism $\sigma$ of ${\rm Conf}_n^\times({\cal A})$} 
\la{section3.5.1}
The automorphism $\sigma$ of $G$ preserves $U$. Thus it descends to an automorphism of ${\cal A}$.
Similarly, it descends to an automorphism of ${\cal B}$. Recall the projection $\pi$ from ${\cal A}$ to ${\cal B}$. Clearly $\sigma$ commutes with the projection: $\pi(\sigma (A))=\sigma(\pi(A))$ for $A\in {\cal A}$. 

Abusing notation, we consider the automorphism
\be
\la{pos.ast.1}
\sigma: ~~{\rm Conf}_n^\times ({\cal A})\stackrel{\sim}{\lra} {\rm Conf}_n^\times({\cal A}), ~~~(A_1,\ldots,
A_n)\lra (\sigma(A_1),  \ldots, \sigma(A_n))
\ee

\bl \la{ast.invariants}
The map $\sigma$ commutes with the invariants in Lemma \ref{basic.inv.ed}:\begin{align}
{\bf ed}(\sigma(A_1),\sigma(A_2))&=\sigma({\bf ed}(A_1, A_2)),\la{3.22.2014.h}\\
{\bf an}(\sigma(B_1),\sigma(A), \sigma(B_2))&=\sigma({\bf an}(B_1,A, B_2)).
\end{align}
\el
\begin{proof}
Let $A_1=U$ and let $A_2={\bf ed}(A_1, A_2)\overline{w}_0\cdot U$. Note that $\sigma$ preserves $U$ and $\overline{w}_0$. Therefore  $\sigma(A_1)=U$ and $\sigma(A_2)=\sigma({\bf ed}(A_1, A_2))\overline{w}_0\cdot U$. The first identity follows. The second identity follows by the same argument.
\end{proof}

\bl 
\la{pos.invast}
The automorphism \eqref{pos.ast.1} is a positive map.
\el
\begin{proof}
Recall the birational map $p$ in \eqref{pos.p.2}.
By Lemma \ref{ast.invariants}, we have the isomorphism 
\be
\la{inv.p.3.19.14s}
p^{-1}\circ \sigma \circ p: ~T^{n-1}\times U_\ast^{n-2}\stackrel{\sim}{\lra}T^{n-1}\times U_\ast^{n-2},
\ee
 $$
 (h_2, \ldots, h_n, u_2, \ldots, u_{n-1})\lms (\sigma(h_2), \ldots, \sigma(h_n),\sigma(u_2), \ldots, \sigma(u_{n-1})).
 $$
 By Lemma \ref{pos.inv.u}, it is a positive map. Since $p$ and $p^{-1}$ are
both positive maps, the automorphism $\sigma$ is positive.
\end{proof}

\bl 
\la{wissigmainv}
The automorphism \eqref{pos.ast.1} preserves the potential ${\cal W}$, i.e.,
\be
{\cal W}\big(\sigma(A_1,  \ldots, A_n)\big)={\cal W}(A_1,\ldots,
A_n)
\ee
\el 
\begin{proof}
It follows  from Lemma \ref{ast.invariants} and the fact that $\chi(\sigma(u))=\chi(u)$.
\end{proof}

By Example \ref{example.lattice.tropical.pts}, the set $T(\Z^t)$ is canonically identified with the lattice $X^\vee$ of cocharacters of $T$. 
Let $\underline{\lambda}=(\lambda_1,\ldots, \lambda_n)\in (X^\vee)^n$. 
We set $\sigma(\underline{\lambda}):=(\sigma(\lambda_1),\ldots, \sigma(\lambda_n))$.
\bd
We define the following set of tropical points
\be
{\bf C}_{\underline{\lambda},G}:=\{l\in {\rm Conf}_n^\times({\cal A})(\Z^t)~|~ {\bf Ed}^t(l)=\underline{\lambda},~{\cal W}^t(l)\geq 0\}.
\ee
\ed

\bl 
\la{11.14.3.23.14s}
The tropicalization of \eqref{pos.ast.1} gives rise to a bijection
\be
\la{sigma.14h}
\sigma^t: {\bf C}_{\underline{\lambda},G} \stackrel{\sim}{\lra} {\bf C}_{\sigma(\underline{\lambda}),G}.
\ee
\el
\begin{remark}
For $\underline{\lambda}=\sigma(\underline{\lambda})$, denote by $({\bf C}_{\underline{\lambda},G})^\sigma$ the set of fixed points under \eqref{sigma.14h}.
\end{remark}
\begin{proof}
Let $l\in {\rm Conf}_n^\times ({\cal A})(\Z^t)$. It suffices to prove that
\be
 \la{3.19.eq4}
{\cal W}^t(\sigma^t(l))={\cal W}^t(l),\hskip 7mm
{\bf Ed}^t(\sigma^t(l))=\sigma({\bf Ed}^t(l)).
\ee
The first identity is due to Lemma \ref{wissigmainv}. The second identity is due to \eqref{3.22.2014.h}. 
\end{proof}

\subsection{The embedding $\imath$ from ${\rm Conf}_n^\times ({\cal A}^\sigma)$ to  ${\rm Conf}_n^\times ({\cal A})$}
Let ${\cal A}^\sigma:=G^\sigma/ U^\sigma$. By the same construction as \eqref{pos.p.2}, the pinning of $G^\sigma$ in Lemma  \ref{pinning.g.sigma.s} determines  an open embedding
\be
p^{\sigma}: (T^\sigma)^{n-1}\times (U^\sigma_\ast)^{n-2}\lhook\joinrel\longrightarrow {\rm Conf}_n^\times({\cal A}^\sigma).
\ee
It induces a positive structure on the latter space.

There is a natural embedding from ${\cal A}^\sigma$ to ${\cal A}$. It induces a natural embedding
\be
\la{iota}
\iota:~ {\rm Conf}_n^\times({\cal A}^\sigma) \hlra {\rm Conf}_n^\times({\cal A})
\ee
\begin{proposition}
\la{13.11.3.23.14h}
The embedding \eqref{iota} is a positive map. The tropicalization of \eqref{iota} is a bijection from ${\rm Conf}_n^\times({\cal A}^\sigma)(\Z^t)$ to the set of $\sigma^t$-fixed points of ${\rm Conf}_n^\times({\cal A})(\Z^t)$.
\end{proposition}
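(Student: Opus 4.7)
The plan is to reduce the statement to the corresponding facts already proved for the single-factor pieces, namely Lemma \ref{3.21.3.19.s} for $U_\ast^\sigma \hookrightarrow U_\ast$ and the elementary fact that $T^\sigma \hookrightarrow T$ is positive with tropicalization $X_\ast(T^\sigma) = X_\ast(T)^\sigma \hookrightarrow X_\ast(T)$. Concretely, I would use the two positive parametrizations $p$ and $p^\sigma$ and analyze the conjugated map
\begin{equation*}
(p)^{-1}\circ \iota \circ p^\sigma:\ (T^\sigma)^{n-1}\times (U_\ast^\sigma)^{n-2} \lra T^{n-1}\times U_\ast^{n-2}.
\end{equation*}
The key point is that $\iota$ intertwines the basic invariants: if $(A_1,\dots,A_n)$ comes from a configuration in ${\rm Conf}_n^\times({\cal A}^\sigma)$, then ${\bf ed}(A_{i-1},A_i)$ and ${\bf an}(\pi(A_1),A_j,\pi(A_{j+1}))$ are computed via the same formulas used to define the $G^\sigma$-invariants, up to the discrepancy between $\overline{w}_0\in G$ and $\hat{w}_0\in G^\sigma$ recorded in \eqref{12.25.3.16.2014s}. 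Thus each ${\bf ed}$-factor is the composition of the tautological inclusion $T^\sigma\hookrightarrow T$ with left multiplication by the fixed element $h\in T$, and each ${\bf an}$-factor is exactly the inclusion $U_\ast^\sigma\hookrightarrow U_\ast$ from Lemma \ref{3.21.3.19.s}.

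For the positivity claim, the translation $t\mapsto ht$ on $T$ is a regular positive map (even an isomorphism of positive tori), and $T^\sigma\hookrightarrow T$ is a homomorphism of split tori whose coordinate expression is a product of monomials, so positive. Combined with the positivity of $U_\ast^\sigma\hookrightarrow U_\ast$ from Lemma \ref{3.21.3.19.s}, the displayed map above is positive factor by factor. Since $p$ and $p^\sigma$ are by definition birational positive equivalences, Lemma \ref{tropical_composition} then yields that $\iota$ is a positive map.

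For the bijection on tropical points, I would argue as follows. By Lemma \ref{11.14.3.23.14s} and the compatibility ${\bf Ed}\circ \sigma = \sigma\circ {\bf Ed}$ (and the analogous compatibility for ${\bf an}$ given by Lemma \ref{ast.invariants}), a tropical point $l \in {\rm Conf}_n^\times({\cal A})(\Z^t)$ is $\sigma^t$-fixed if and only if every coordinate of $(p^t)^{-1}(l)$ in $T^{n-1}(\Z^t)\times U_\ast^{n-2}(\Z^t)$ is $\sigma^t$-fixed. By the $T$-analog of Lemma \ref{3.21.3.19.s} (trivial) and Lemma \ref{3.21.3.19.s} itself, the set of such tuples is precisely the image of $((T^\sigma)^{n-1}\times(U_\ast^\sigma)^{n-2})(\Z^t)$. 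Translating back through $p^\sigma$ and $\iota$, these are exactly the tropical points in the image of $\iota^t$. Injectivity of $\iota^t$ on this image follows because each factor map is injective on tropical points, and a translation by $h\in T$ is a bijection on $X_\ast(T)$ preserving $X_\ast(T)^\sigma$.

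The one subtle point, which I view as the main obstacle, is verifying in step one that the formula for the $G$-invariant ${\bf ed}$ applied to a pair $(A_1,A_2)$ coming from ${\cal A}^\sigma$ really agrees, up to the explicit element $h$ of \eqref{12.25.3.16.2014s}, with the $G^\sigma$-invariant defined using $\hat{w}_0$; and similarly that the $G$-invariant ${\bf an}$ restricts to the $G^\sigma$-invariant under the inclusions $U_\ast^\sigma \hookrightarrow U_\ast$, $B^-\cap G^\sigma \subseteq B^-$. Both are direct but require unpacking the definitions using the pinning of $G^\sigma$ from Lemma \ref{pinning.g.sigma.s}; once they are checked, positivity and the tropical bijection follow formally from the lemmas cited above.
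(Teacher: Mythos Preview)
Your proposal is correct and follows essentially the same route as the paper: both analyze the composition $p^{-1}\circ\iota\circ p^\sigma$, identify it factorwise as the inclusion $U_\ast^\sigma\hookrightarrow U_\ast$ on the angle invariants and the inclusion $T^\sigma\hookrightarrow T$ followed by translation by the fixed element $h$ of \eqref{12.25.3.16.2014s} on the edge invariants, and then invoke Lemma~\ref{3.21.3.19.s} together with the trivial torus case. The only sharpening the paper makes is to observe directly that translation by the constant $h$ tropicalizes to the identity on $X_\ast(T)$, so $\jmath^t$ is literally the factorwise inclusion; your phrasing ``a bijection preserving $X_\ast(T)^\sigma$'' is correct but slightly weaker than needed.
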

\begin{proof}
We consider the following composition map
\be
\la{2.12.18.59s}
\jmath: \quad (T^\sigma)^{n-1} \times (U^\sigma_\ast)^{n-2} \stackrel{p^{\sigma}}{\lra}{\rm Conf}_n^\times({\cal A}^\sigma) \stackrel{\iota}{\lra} {\rm Conf}_n^\times({\cal A})\stackrel{p^{-1}}{\lra}
{T}^{n-1}\times{U}^{n-2}.
\ee
Precisely it is given by 
$$
\big(h_2,\ldots, h_n, u_2,\ldots, u_{n-1}\big)\lms \big(h_2h, \ldots, h_{n}h,
u_2, \ldots, u_{n-1}\big),
$$
where $h$ is the element in $T$ described in \eqref{12.25.3.16.2014s}. The element $h$ appears because we use $\hat{w}_0$  instead of $\overline{w}_0$ to define the isomorphism from ${\rm Conf}_2^{\times}({\cal A}^\sigma)$ to $T^\sigma$. Clearly \eqref{2.12.18.59s} is a positive map. Therefore \eqref{iota} is a positive map.

Let us tropicalize the map \eqref{2.12.18.59s}. Note that  $h$ does not contribute to the tropicalization. Therefore we get an injection
$$
\jmath^t: \quad (T^\sigma(\Z^t))^{n-1}\times (U^\sigma_\ast(\Z^t))^{n-2}\lra (T(\Z^t))^{n-1}\times (U_\ast(\Z^t))^{n-2}
$$
$$
(\lambda_2,\ldots, \lambda_{n}, l_2,\ldots, l_{n-1})\lms 
(\lambda_2,\ldots, \lambda_{n}, \imath^t(l_2),\ldots, \imath^t(l_{n-1})).
$$
By Lemma \ref{3.21.3.19.s}, the image of $\jmath^t$ is precisely the set of $\sigma^t$-fixed points. Thus the map $\iota^t$ is a bijection from ${\rm Conf}_n^{\times}({\cal A}^\sigma)(\Z^t)$ to the set of $\sigma^t$-fixed points of ${\rm Conf}_n^{\times}({\cal A}^\sigma)(\Z^t)$.
\end{proof}

Similarly, we have the following positive map / function 
\be
{\bf Ed}_\sigma: {\rm Conf}_n^\times({\cal A}^\sigma)\lra (T^\sigma)^n,\quad \quad {\cal W}_\sigma:{\rm Conf}_n^\times({\cal A}^\sigma)\lra \mathbb{A}^1.
\ee
Let $\underline{\lambda}=(\lambda_1,\ldots, \lambda_n)\in (T^\sigma(\Z^t))^n\subset (T(\Z^t))^n$. Then $\sigma(\underline{\lambda})=\underline{\lambda}$. We set
\be
{\bf C}_{\underline{\lambda}, G^\sigma}:=\{l\in {\rm Conf}_n^\times({\cal A}^\sigma)(\Z^t)~|~ {\bf Ed}_\sigma^t(l)=\underline{\lambda},~{\cal W}_\sigma^t(l)\geq 0\}.
\ee

\bt
\label{technical.thm.1}
The tropicalization of  \eqref{iota} gives rise to a canonical bijection 
\be 
\iota^t: {\bf C}_{\underline{\lambda}, G^\sigma} \stackrel{\sim}{\lra } \big({\bf C}_{\underline{\lambda},G}\big)^\sigma.
\ee
\et

\begin{proof}
It follows from Proposition \ref{13.11.3.23.14h} and the identities
\be
\la{14.50.3.17.14s}
{\cal W}_\sigma^t={\cal W}^t\circ \iota^t,\hskip 7mm
{\bf Ed}_\sigma^t={\bf Ed}^t\circ \iota^t.
\ee
The first identity in \eqref{14.50.3.17.14s} is due to Lemma \ref{12.1.3.17.14h}. The second identity follows similarly.
\end{proof}

\subsection{Summation of tropical points}
\label{Average_Tropical_Point}
Let $\lambda\in X^\vee$. We set
\be
\la{sum.of.trop.lambda}
S(\lambda):=\begin{cases}
\lambda+ \sigma(\lambda); &\text{ if  }  G  \text{ is not of type }  A_{2n}
\text{  and }  \sigma \text{ is  of order }  2,\\
\lambda+ \sigma(\lambda)+ \sigma(\sigma(\lambda)) \qquad  &\text{ if  } \sigma
 \text{ is of order 3},\\
\lambda+\sigma(\lambda)+\sigma\big(\lambda+\sigma(\lambda)\big)  &\mbox{
if } \sigma \text{ is of order 2 and } G  \text{ is of type } A_{2n}.\\
\end{cases}
\ee
It is easy to show that $S(\lambda)$ is $\sigma$-invariant. In particular, if $\lambda$
is $\sigma$-invariant, then $S(\lambda)=c_{\sigma}\lambda$, where $c_{\sigma}$
is described in \eqref{satu.fac.s}.
 
\vskip 3mm

Let us fix a reduced word ${\bf i}$ for $w_0$ in $W$ induced by a reduced word ${\bf j}$ for $w_0$ in $W^\sigma$. 
By \eqref{2.9.3.20.14s}, the the Lusztig
atlas $\gamma_{\bf i}$ determines an operation
on $U_\ast(\Z^t)$,  which we denote by $+_{\bf i}$ for short. Let $l\in \U_\ast(\Z^t)$. We set
\be
\la{sum.of.trop.u}
S_{\bf i}(l):=\begin{cases}
l+_{\bf i} \sigma^t(l); &\text{ if  }  G  \text{ is not of type }  A_{2n}
\text{  and }  \sigma \text{ is  of order }  2,\\
l+_{\bf i} \sigma^t(l)+_{\bf i} \sigma^t\circ\sigma^t(l) \qquad  &\text{
if  } \sigma  \text{ is of order 3},\\
l+_{\bf i}\sigma^t(l)+_{\bf i}\sigma^t\big(l+_{\bf i}\sigma^t(l)\big)  &\mbox{
if } \sigma \text{ is of order 2 and } G  \text{ is of type } A_{2n}.\\
\end{cases}
\ee

\bl
\la{lemma.summ.s}
We have $S_{\bf i}(l)\in \big(U_\ast(\Z^t)\big)^\sigma$.
\el

\begin{proof}
We prove the case when $G$ is of type $A_2$ and $\sigma$ is of order 2. The
other cases follow by a similar but easier argument.

Note that ${\bf i}=(1,2,1)$ is a reduced word of $w_0$ in $W$. 
Let $(m_1, m_2, m_3)$ be the coordinate of $l$ provided by $\gamma_{\bf i}$. The coordinate of $\sigma^t(l)$ is given by \eqref{3.18.2014.a2}. Thus
the coordinate of $l+_{\bf i}\sigma^t(l)$ is $(n,m,n)$, where  
$$n=m_1+m_2+m_3-\min\{m_1,m_3\},\quad \quad  m=m_2+\min\{m_1,m_3\}.
$$
Using \eqref{3.18.2014.a2} again, the coordinate of $\sigma^t\big(l+_{\bf
i}\sigma^t(l)\big)$ is
$(m,n,m)$.
The coordinate of $S_{\bf i}(l)$ is
$$
(m+n, m+n, m+n)=(m_1+2m_2+m_3, m_1+2m_2+m_3, m_1+2m_2+m_3).
$$
By Lemma \ref{u.inv.3.18.s}, $S_{\bf i}(l)$ is $\sigma^t$-invariant.
\end{proof}

\vskip 3mm

Let ${\cal T}:=T^{n-1}\times ({\Bbb G}_m^N)^{n-2}$. 
There is  a chain of open embedding
$$
\Phi_{\bf i}:   \quad {\cal T}=T^{n-1}\times ({\Bbb G}_m^N)^{n-2}\stackrel{{id}\times
\gamma_{\bf i}^{n-2}}{\hlra} T^{n-1}\times (U_{\ast})^{n-2}\stackrel{p}{\hlra}{\rm
Conf}_n^\times({\cal A}).
$$
Its tropicalization is a chain of bijections
 \be \la{12.15.3.23.14s}
 \Phi^t_{\bf i}: \quad {\cal T}(\Z^t)\stackrel{=}{\lra}\big(T(\Z^t)\big)^{n-1}\times
(U_\ast(\Z^t))^{n-2}\stackrel{=}{\lra} {\rm Conf}_n^\times({\cal A})(\Z^t).
\ee
By \eqref{2.9.3.20.14s}, $\Phi^t_{\bf i}$ induces an operation on ${\rm Conf}_n^\times({\cal
A})(\Z^t)$, which is denoted by $+_{\bf i}$ for short.

\bl 
\la{2.56.3.19.s}
Let $l, l'\in {\rm Conf}_n({\cal A})(\Z^t)$.  We have 
\begin{align}
&{\cal W}^t(l+_{\bf i}l')\geq {\cal W}^t(l)+{\cal W}^t(l'), \la{3.19.eq1}\\
&{\bf Ed}^t(l+_{\bf i}l')={\bf Ed}^t(l)+{\bf Ed}^t(l'),\la{3.19.eq2}
\end{align}
\el

\begin{proof}

Note that ${\cal W}\circ \Phi_{\bf i}$ is a regular function of ${\cal T}$.
The inequality \eqref{3.19.eq1} follows from Lemma \ref{convex}.
Note that ${\bf Ed}\circ \Phi_{\bf i}$ is a regular map from the torus ${\cal
T}$ to the torus $T^n$. The identity \eqref{3.19.eq2} follows from
Lemma \ref{linear}.
\end{proof}

\blc
\la{3.00.3.19.s}
Let $l\in{\rm Conf}_n({\cal A})(\Z^t)$. We set
$$
\Sigma_{\bf i}(l):=
\begin{cases}
l+_{\bf i} \sigma^t(l); &\text{ if  }  G  \text{ is not of type }  A_{2n}
\text{  and }  \sigma \text{ is  of order }  2,\\
l+_{\bf i} \sigma^t(l)+_{\bf i} \sigma^t\circ\sigma^t(l) \qquad  &\text{
if  } \sigma  \text{ is of order 3},\\
l+_{\bf i}\sigma^t(l)+_{\bf i}\sigma^t\big(l+\sigma^t(l)\big)  &\mbox{ if
}\sigma  \text{ is of order 2 and } G  \text{ is of type } A_{2n}.\\
\end{cases}
$$
Then $\Sigma_{\bf i}(l)$ is $\sigma^t$-invariant.
\elc

\begin{proof} 
Note that the second bijection of \eqref{12.15.3.23.14s}  is given by
the tropicalization of   \eqref{pos.p.2}
$$
p^t: \big(T(\Z^t)\big)^{n-1}\times \big(U_\ast(\Z^t)\big)^{n-2}\stackrel{=}{\lra}
{\rm Conf}_n({\cal A})(\Z^t).
$$
For $l\in {\rm Conf}_n({\cal A})(\Z^t)$, we consider its preimage
$$\beta(l):=(p^t)^{-1}(l)=(\lambda_2,\ldots, \lambda_{n}, l_2,\ldots, l_{n-1})\in
\big(T(\Z^t)\big)^{n-1}\times \big(U_\ast(\Z^t)\big)^{n-2}.$$
Using \eqref{sum.of.trop.lambda}\eqref{sum.of.trop.u}, we get
$$
\beta\big(\Sigma_{\bf i}(l)\big)=\big(S(\lambda_2),\ldots, S(\lambda_{n}),
S_{\bf i}(l_2),\ldots, S_{\bf i}(l_{n-1})\big).
$$
By Lemma \ref{ast.invariants}, we have
$$
\beta\big(\sigma^t(\Sigma_{\bf i}(l))\big)=\big(\sigma^t(S(\lambda_2)),\ldots,
\sigma^t(S(\lambda_{n})), \sigma^t(S_{\bf i}(l_2)),\ldots, \sigma^t(S_{\bf
i}(l_{n-1}))\big).
$$
By Lemma \ref{lemma.summ.s}, we have $\beta\big(\Sigma_{\bf i}(l)\big)=\beta\big(\sigma^t(\Sigma_{\bf
i}(l))\big)$. Therefore $\Sigma_{\bf i}(l)$ is $\sigma^t$-invariant.
\end{proof}

\bt
\la{technical.thm.2}
If ${\bf C}_{\underline{\lambda},G}$ is nonempty, then  $( {\bf C}_{S(\underline{\lambda}),G})^\sigma$
is nonempty.
\et

\begin{proof}
If ${\bf C}_{{\underline{\lambda}, G}}$ is nonempty, then we pick an element
$l\in {\bf C}_{{\underline{\lambda}, G}}$. It suffices to show that $\Sigma_{\bf i}(l)\in  \big({\bf C}_{S(\underline{\lambda}), G}\big)^\sigma$.
By Lemma \ref{11.14.3.23.14s}
and Lemma \ref{2.56.3.19.s}, we get
$$
{\cal W}^t(\Sigma_{\bf i}(l))\geq c_{\sigma}{\cal W}^t(l)\geq 0,\quad \quad
{\bf Ed}^t(\Sigma_{\bf i}(l))= S({\bf Ed}^t(l))=S(\underline{\lambda}).
$$
So $\Sigma_{\bf i}(l)\in {\bf C}_{S(\underline{\lambda}), G}$. By
Lemma \ref{3.00.3.19.s}, $\Sigma_{\bf i}(l)$ is $\sigma^t$-invariant. 
\end{proof}

\section{Affine Grassmannian and Satake basis}
\label{Satake_Section}
\subsection{Top components of cyclic convolution variety}
Let  $Gr_G:=G({\cal K})/G({\cal O})$ be the affine Grassmannian of $G$.
We consider the action of the maximal torus $T$ on $Gr_G$. The fixed points of $T({\cal O})$ on $Gr_G$ consist of $[\lambda]=t^{\lambda}\cdot G(\OO)$, where $\lambda$ is a coweight of $G$ and $t^\lambda\in T({\cal K})$. 

Let $X^\vee_+$ denote the cone of dominant coweights of $G$. For $L_1,L_2\in Gr_G$, there exists a unique  $\lambda\in X^\vee_+$ such that  $G({\cal K})\cdot (L_1,L_2)=G({\cal K})\cdot([1],[\lambda])$. We write $d(L_1, L_2):=\lambda$, which we call the distance from $L_1$ to $L_2$.


 Let $\underline{\lambda}=(\lambda_1,\ldots, \lambda_n)\in (X^\vee_+)^n$. We consider the {\it cyclic convolution variety}
\be
Gr_{G, c(\underline{\lambda})}:=\{(L_1, L_2,\ldots, L_n)~|~ L_n=[1];~d(L_{i-1}, L_{i})=\lambda_i \mbox{ for }i\in \Z/n\}.
\ee
The variety $Gr_{G, c(\underline{\lambda})}$ is of  (complex) dimension
$$
ht(\underline{\lambda}):=\langle \rho, \lambda_1+\lambda_2+\ldots+\lambda_n\rangle,
$$
where $\rho$ is the half sum of positive roots of $G$.
Denote by ${\bf T}_{\underline{\lambda}, G}$ the set of  irreducible components of $Gr_{G, c(\underline{\lambda})}$ of dimension $ht(\underline{\lambda})$.

Note that the Dynkin automorphism $\sigma$ of $G$ preserves ${G}({\cal O})$. So it descends to an automorphism of $Gr_G$. 
Clearly $\sigma$ commutes with the distance map:
\be
d(\sigma(L_1), \sigma(L_2))=\sigma(d(L_1, L_2)),\quad \quad \forall L_1, L_2\in Gr_G.
\ee
Therefore we get a natural bijection
$$
\sigma: Gr_{G, c(\underline{\lambda})}\stackrel{\sim}{\lra} Gr_{G, c(\sigma(\underline{\lambda}))},~~~~(L_1,L_2,\ldots, L_n)\lms (\sigma(L_1),\sigma(L_2),\ldots, \sigma(L_n)).
$$
It induces a bijection on the set of top components 
\be
\la{sigmamapontopcomp}
\sigma:  {\bf T}_{\underline{\lambda},G}\stackrel{\sim}{\lra} {\bf T}_{\sigma(\underline{\lambda}),G}.
\ee

\subsection{Parametrization of top components}
First we briefly recall the {\it constructible functions} in \cite[Section 2.2.5]{GS}. 

Let $R$ be a reductive group over $\C$.
Let ${\cal X}$ be a rational space over $\C$. 
We assume that there is a rational left algebraic action of $R$ on ${\cal X}$. 
Let $\C({\cal X})$ be the function field of ${\cal X}$.
Denote by  by $\circ$ the induced right action of $R$ on $\C({\cal X})$: 
$$
\forall g\in R, ~~\forall F\in \C({\cal X}),\hskip 7mm \big(F\circ g\big)(x):= F(g\cdot x).
$$

 Let ${\cal K}({\cal X})$ be the field of rational functions of ${\cal X}$ with ${\cal K}$-coefficients. 
 The valuation of ${\cal K}^\times$ induces a natural valuation map
 $$
 \val: ~~{\cal K}({\cal X})^\times \lra \Z.
 $$
Here $R({\cal K})$ acts on ${\cal K}({\cal X})$ on the right. Each $F\in {\cal K}({\cal X})^\times$ gives rise
to a $\Z$-valued function
\be
D_F: R({\cal K})\lra \Z, ~~~~g\lms \val (F\circ g).
\ee
The action of the subgroup $R({\cal O})$ preserves the valuation of ${\cal K}({\cal X})^\times$ (\cite[Lemma 2.21]{GS}). So $D_F$ descends to a function from $R({\cal K})/R({\cal
O})$ to $\Z$ which we also denotes by $D_F$.

\vskip 3mm

Assume that there is an automorphism $\tau$ of $R$ and an isomorphism $\tau$ of 
${\cal X}$ such that
\be
\la{theta.equiv.s}
\tau(g\cdot x)=\tau(g)\cdot \tau(x),\quad \quad \forall
g\in R,~\forall x\in {\cal X}. 
\ee
We define a field isomorphism $\tau: \C({\cal X})\stackrel{\sim}{\lra} \C({\cal X})$ such that
\be
\la{3.12.2015.1hl}
\forall F\in \C({\cal X}),\quad \tau(F)(x):=F(\tau^{-1}(x)).
\ee
It is easy to check that 
\be
\forall g\in R, ~~\forall F\in \C({\cal X}), \hskip 7mm
\tau(F\circ g)=\tau(F)\circ \tau (g)
\ee
\bl 
\la{lem.theta.equiv.s}
We have $D_{\tau(F)}(\tau(g))=D_{F}(g)$.
\el
\begin{proof}
Note that $\tau$ preserves the valuation of ${\cal K}({\cal X})^\times$. Therefore
$$
D_{\tau(F)}(\tau(g))=\val\big(\tau(F)\circ \tau(g)\big)=\val\big(\tau(F\circ g)\big)=\val\big(F\circ 
g\big)=D_F(g).
$$
\end{proof}

Now let ${\cal X}:={\cal A}^n$ and let $R:=G^n$. The group $G^n$ acts on ${\cal A}^n$ on the left.
Note that the set $\Q_+\big({\rm Conf}_n^\times({\cal A})\big)$ of positive functions of ${\rm Conf}_n^\times({\cal A})$ is contained in ${\cal K}({\cal A}^n)^\times$.
Each $F\in \Q_+\big({\rm Conf}_n({\cal A})\big)$
induces a function
$$
D_F: R({\cal K})/R({\cal O})=Gr_G^n\lra \Z.
$$
which we call a constructible function on $Gr_G^n$.

\bt[{\cite[Theorems 2.20, 2.23]{GS}}]
\la{3.27.3.23.14h}
There is a canonical bijection 
\be
\kappa: {\bf C}_{\underline{\lambda}, G} \lra {\bf T}_{\underline{\lambda},G},\quad\quad l\lms \kappa(l),
\ee
such that for each $F\in \Q_+\big({\rm Conf}_n({\cal A})\big)$, the generic value of the constructible function $D_F$ on the component $\kappa(l)$ is equal to $F^t(l)$.
\et
\begin{remark}
\la{remark.3.27.3.23.14h}
Let $l, l'\in {\rm Conf}_n({\cal A})(\Z^t)$. 
By the definition of tropical points, we have
$$
l=l' ~~\Longleftrightarrow ~~F^t(l)=F^t(l'),~~ \forall F\in {\Q}_+\big({\rm Conf}_n({\cal A})\big).
$$
Therefore to identify two top components in ${\bf T}_{\underline{\lambda},G}$,
it suffices to show that the generic values of all constructible functions on both components are equal. 
\end{remark}
Recall the bijections \eqref{sigma.14h}\eqref{sigmamapontopcomp}.
\bt
\label{comm.kappa}
The following diagram commutes
\begin{displaymath}
    \xymatrix{
        {\bf C}_{\underline{\lambda},G} \ar[r]^{\kappa} \ar[d]_{\sigma^t} & {\bf T}_{\underline{\lambda},G} \ar[d]^{\sigma} \\
        {\bf C}_{\sigma(\underline{\lambda}),G}   \ar[r]^{\kappa}       & {\bf T}_{\sigma(\underline{\lambda}),G} }.
\end{displaymath}
\et
\begin{proof}
Let $\sigma$ be the automorphism  of $G^n$ such that $\sigma(g_1,\ldots, g_n):=(\sigma(g_1),\ldots, \sigma(g_n))$. 
Let $\sigma$ be the isomorphism of ${\cal A}^n$ such that $\sigma(A_1,\ldots, A_n):=(\sigma(A_1),\ldots, \sigma(A_n))$. Note that $\sigma(g\cdot A)=\sigma(g)\cdot \sigma(A)$. So $\sigma$ agrees with $\tau$ map in \eqref{theta.equiv.s}. 

Let $F\in \Q_{+}\big({\rm Conf}_n^\times({\cal A})\big)$ and let $(L_1,\ldots, L_n)\in Gr_{G, c(\underline{\lambda})}$. 
By Lemma  \ref{lem.theta.equiv.s}, we have
$$
D_F(\sigma(L_1),\ldots,\sigma(L_n))=D_{\sigma^{-1}(F)}(L_1,\ldots, L_n).
$$
Let $l\in  {\bf C}_{\underline{\lambda},G}$. 
Then the generic value of $D_F$ on  $\sigma(\kappa(l))$ is equal to the generic value of $D_{\sigma^{-1}(F)}$ on $\kappa(l)$. By Theorem \ref{3.27.3.23.14h} and \eqref{3.12.2015.1hl}, the latter is  $\big(\sigma^{-1}(F)\big)^t(l)=F^t(\sigma^t(l))$, which is the  generic value of $D_F$ on the component  $\kappa(\sigma^t(l))$. By Remark \ref{remark.3.27.3.23.14h}, $\sigma(\kappa(l))$ and $\kappa(\sigma^t(l))$ are the same component.
\end{proof}

Let ${\und \lambda}= \sigma({\und \lambda})$. Denote by $({\bf T}_{{\und \lambda}, G})^\sigma$ the set of $\sigma$-stable top components of $Gr_{c({\und \lambda})}$. 
\bc
\la{3.27.3.23.14hhh}
There is a natural bijection between $({\bf T}_{{\und \lambda}, G})^\sigma$ and ${\bf T}_{{\und \lambda}, G^\sigma}$.
\ec
\begin{proof}
It follows from the following sequence of bijections
$$
({\bf T}_{{\und \lambda}, G})^\sigma\simeq ({\bf C}_{\underline{\lambda}, G})^\sigma \simeq {\bf C}_{{\und \lambda}, G^\sigma} \simeq {\bf T}_{{\und \lambda}, G^\sigma}.
$$
The first bijection is due to Theorem \ref{comm.kappa}. The second bijection is due to Theorem \ref{technical.thm.1}. The third bijection is due to Theorem \ref{3.27.3.23.14h}.
\end{proof}

\subsection{Geometric Satake correspondence and Satake basis}
\la{Satake_Basis}
Let ${\rm Perv}_{G(\OO)}(Gr_{G})$ be the category of $G(\OO)$-equivariant perverse sheaves.
Let ${\rm Rep}(G^\vee)$ be the category of finite dimensional representations of $G^\vee$.
The geometric Satake correspondence (e.g. \cite[Therorem 14.1]{MV}) asserts that there is an  equivalence of tensor categories 
$$\mathbb H: {\rm Perv}_{G^\vee(\OO)}(Gr_{G})\simeq \rm{Rep}(G^\vee),$$
where $\mathbb H$ is given by the hypercohomology of perverse sheaves. The tensor category structure on ${\rm Perv}_{G(\OO)}(Gr_{G})$ can be defined via the convolution product (e.g. \cite[Section 4]{MV}).

Let $\und \lambda=(\lambda_1,\lambda_2,\cdots,\lambda_n)$ be a sequence of dominant coweights of $G$.
We define the {\it convolution variety}
\be
Gr_{G, \und \lambda}:=\{(L_1,L_2,\ldots, L_n)~|~ d([1], L_1)=\lambda_1;~d(L_{i-1}, L_{i})=\lambda_{i} \mbox{ for } i=2,\ldots, n\}.
\ee
Denote by $\overline{Gr_{G,\und \lambda}}$ the closure of $Gr_{G,\und
\lambda}$ in $(Gr_{G})^n$. Let $IC_{\und \lambda}$ be the IC sheaf supported on $\overline{Gr_{G,\und \lambda}}$. 
There is a natural projection
\be 
p:Gr_{G}^n\to Gr_{G}, \quad \quad p(L_1,L_2,\cdots,L_n)=L_n.
\ee
The convolution products of perverse sheaves in ${\rm Perv}_{G(\OO)}(Gr_{G})$ are defined such that
\be  IC_{\lambda_1}\ast IC_{\lambda_2}\ast \cdots\ast IC_{\lambda_n}=p_*(IC_{\und \lambda}).
\ee
Note that the cyclic convolution variety $Gr_{G,c(\und \lambda)}$ is the fiber
$$
 Gr_{G,c(\und \lambda)}=p^{-1}([1])\cap Gr_{G, \und \lambda}.
$$
Recall that $\rm ht(\und \lambda)=\langle\rho,\sum_{i=1}^n \lambda_i\rangle$. We have 
$$\dim  Gr_{G,\und
\lambda}=2\rm{ht}(\und \lambda), \quad \quad \dim Gr_{G,c(\und
\lambda)}=\rm ht(\und \lambda).
$$

The following lemma explains why  the set of top components of  $Gr_{G, c(\und \lambda)}$ gives a basis in the tensor invariant space $V^{G^\vee}_{\und \lambda}$. It is well-known to experts. We give a proof here since we can not find any proof in the literature. 
\begin{lemma}
\label{Tensor_Invariant_Cycles}
There is a canonical isomorphism $\alpha: V_{\und \lambda}^{G^\vee}\simeq H_{\rm top}(Gr_{G,c(\und
\lambda)},\C)$, where $H_{\rm top}(Gr_{G,c(\und
\lambda)},\C)$ is the top Borel-Moore homology of $Gr_{G,c(\und
\lambda)}$.  As a consequence, the set of top components of $Gr_{G,c(\und
\lambda)}$ provides a basis of $V^{G^\vee}_{\und \lambda}$.
\end{lemma}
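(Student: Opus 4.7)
My plan is to invoke the geometric Satake equivalence to translate the tensor invariant space into a Hom space in the category of perverse sheaves, and then compute this Hom space geometrically via proper base change and a dimension estimate.

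First, the geometric Satake equivalence together with the interpretation of tensor products as convolution identifies $V_{\und \lambda} = V_{\lambda_1}\otimes \cdots \otimes V_{\lambda_n}$ with $\mathbb{H}(Gr_G, p_*(IC_{\und \lambda}))$. Under $\mathbb{H}$, the trivial representation of $G^\vee$ corresponds to the skyscraper sheaf $\delta_{[1]}$, i.e.\ the IC sheaf of the zero coweight orbit $\{[1]\}$. By semisimplicity of the Satake category one has a decomposition $p_*(IC_{\und \lambda}) = \bigoplus_{\mu} V_\mu\otimes IC_\mu$, and hence $V_{\und \lambda}^{G^\vee} = V_0 = {\rm Hom}(\delta_{[1]},\, p_*(IC_{\und \lambda}))$. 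By the adjunction for the closed embedding $i_{[1]}:\{[1]\}\hookrightarrow Gr_G$, this equals $H^0(i_{[1]}^! p_*(IC_{\und \lambda}))$.

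Next, since $p$ is proper on the support $\overline{Gr_{G,\und\lambda}}$, proper base change yields $i_{[1]}^! p_*(IC_{\und \lambda}) = R\Gamma(\overline{Gr_{G,c(\und\lambda)}},\, j^! IC_{\und \lambda})$, where $j: \overline{Gr_{G,c(\und\lambda)}}\hookrightarrow \overline{Gr_{G,\und\lambda}}$ is the inclusion of the fiber over $[1]$. On the smooth open stratum $Gr_{G,\und\lambda}$ the IC sheaf is $\underline{\C}[2\,\mathrm{ht}(\und\lambda)]$; since $Gr_{G,c(\und\lambda)}$ is equidimensional of complex dimension $\mathrm{ht}(\und\lambda)$ (the standard half-dimension estimate for fibers of the convolution morphism), it sits inside $Gr_{G,\und\lambda}$ as a closed subvariety of codimension $\mathrm{ht}(\und\lambda)$, and the $!$-pullback produces $\omega_{Gr_{G,c(\und\lambda)}}[-2\,\mathrm{ht}(\und\lambda)]$ on the smooth locus. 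Its degree-zero hypercohomology is then by definition the top Borel--Moore homology $H^{BM}_{2\,\mathrm{ht}(\und\lambda)}(Gr_{G,c(\und\lambda)},\C)$, whose natural basis consists of the fundamental classes of the top-dimensional irreducible components.

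The main obstacle is justifying that contributions from the singular boundary $\overline{Gr_{G,c(\und\lambda)}}\setminus Gr_{G,c(\und\lambda)}$ do not interfere with $H^0$. The key inputs are: (i) the boundary has strictly smaller dimension --- this follows by applying the same half-dimension bound to the lower strata of $\overline{Gr_{G,\und\lambda}}$ --- so that the top Borel--Moore homology of the closure agrees with that of the open part; and (ii) by perversity of $IC_{\und \lambda}$ together with the $!$-exactness properties of the closed embedding, $j^! IC_{\und \lambda}$ lies in non-negative perverse cohomological degrees, so only its zeroth perverse cohomology contributes to $H^0$ of the hypercohomology. Combining (i) and (ii) gives the canonical isomorphism $\alpha: V_{\und \lambda}^{G^\vee} \simeq H_{\rm top}(Gr_{G,c(\und\lambda)},\C)$ with basis the top components of $Gr_{G,c(\und\lambda)}$, as claimed.
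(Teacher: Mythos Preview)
Your outline agrees with the paper's through the identification $V_{\und\lambda}^{G^\vee}\cong H^0\big(i_{[1]}^!\,p_*(IC_{\und\lambda})\big)$ and the passage to the fiber via base change. The gap is in step (ii). It is true that $j^! IC_{\und\lambda}\in{}^p D^{\ge 0}$ for your closed embedding $j$, but it does \emph{not} follow that only ${}^p H^0$ contributes to ordinary $H^0$ of the hypercohomology: global sections on a proper variety are not left $t$-exact for the perverse $t$-structure. Concretely, writing the closed fiber as $\overline F=F\cup\partial F$ with $F=Gr_{G,c(\und\lambda)}$ and $v:\partial F\hookrightarrow\overline F$, the open--closed triangle gives
\[
0\;\longrightarrow\; H^0\big(\overline F,\,j^!IC_{\und\lambda}\big)\;\longrightarrow\; H^{BM}_{2\,{\rm ht}(\und\lambda)}(F)\;\longrightarrow\; H^1\big(\partial F,\,v^!j^!IC_{\und\lambda}\big).
\]
The vanishing of the boundary $H^0$ term (hence the left injection) can indeed be extracted from the half-dimension estimate (i) via a support-condition spectral sequence count, but nothing in (i)+(ii) forces the connecting map on the right to vanish. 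Thus your argument only yields $\dim V_{\und\lambda}^{G^\vee}\le \#\{\text{top components}\}$, not the isomorphism.

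The paper closes this gap with a nontrivial external input, \cite[Lemma~2.43]{GS}, which identifies $IC_{\und\lambda}$ with the derived pushforward of $\C[2\,{\rm ht}(\und\lambda)]$ from the open stratum $Gr_{G,\und\lambda}$. After this substitution one has $p_*IC_{\und\lambda}\simeq (p^\circ)_*\,\C[2\,{\rm ht}(\und\lambda)]$ with $p^\circ=p|_{Gr_{G,\und\lambda}}$; Verdier duality together with base change for $(p^\circ)_!$ then give $H^0\big(i_{[1]}^!\,p_*IC_{\und\lambda}\big)\cong H^{2\,{\rm ht}(\und\lambda)}_c(F,\C)^*\cong H_{\rm top}(F,\C)$ directly, with no boundary term left to control. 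An input of comparable strength (ultimately a parity/purity statement for IC sheaves on affine Schubert varieties) is what your argument is missing.
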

\begin{proof}
By the geometric Satake correspondence, there is a  natural isomorphism
$$V_{\und \lambda}^{G^\vee} \simeq {\rm Hom}_{Gr}(IC_{[1]}, IC_{\lambda_1}*IC_{\lambda_2}*\cdots*IC_{\lambda_n})={\rm Hom}_{Gr_{G}}(IC_{[1]}, p_\ast (IC_{\und \lambda})).$$
We define the embedding $i_{\rm pt}: {\rm pt} \to Gr_G$ by setting  $i_{\rm pt}(\rm pt)=[1]$.
Then $IC_{[1]}=(i_{\rm pt})_*(\C)$. By the adjunction between $(i_{\rm pt})_*$ and $(i_{\rm pt})^!$, we get 
$${\rm Hom}_{Gr_{G}}\big(IC_{[1]},p_\ast (IC_{\und \lambda})\big)\simeq H^0\big((i_{\rm pt})^!(p_* IC_{\und \lambda})\big). $$
Denote by $j$ the locally closed  embedding from $Gr_{G,\und \lambda}$ to $Gr_{G}^n$. 
By \cite[Lemma 2.43]{GS}, there exist natural isomorphisms
\be
\la{13.51.4.12.14h}
j_! (\C_{Gr_{G,\und \lambda}}[{\rm ht}(\und \lambda)])\simeq IC_{\und \lambda}\simeq j_*(\C_{Gr_{G,\und \lambda}}[2\rm{ht}(\und \lambda)]).
\ee
Let $p^\circ$ be the restriction map of $p$ on  $Gr_{G,\und \lambda}$. Let $i$ be the inclusion $Gr_{G,\und \lambda}\hookrightarrow Gr_{G}^n.$
By \eqref{13.51.4.12.14h}, we get  
$$(i_{\rm pt})^!(p_* IC_{\und \lambda})\simeq (i_{\rm pt})^!p_* j_* (\C_{Gr_{G,\und \lambda}}[2{\rm ht}(\und \lambda)])\simeq (i_{\rm pt})^!(p^\circ)_* (\C_{Gr_{G,\und \lambda}}[2\rm{ht}(\und
\lambda)]). $$
By Poincare-Verdier duality, there is a natural isomorphism
$$(i_{\rm pt})^! p^\circ_* (\C_{Gr_{G^,\und \lambda}}[2{\rm{ht}}(\und
\lambda)])\simeq \mathbb{D}_{\rm pt}\big((i_{\rm pt})^* (p^\circ)_! (\C_{Gr_{G,\und \lambda}}[2{\rm{ht}}(\und
\lambda)])\big).$$
Here $\mathbb D_{\rm pt}$ denotes the duality functor on the complex of vector spaces.

By the base change theorem with respect to the following pull-back diagram
\begin{equation}
\xymatrix{
Gr_{G,c(\und \lambda)} \ar[r]^{i} \ar[d]^{} &  Gr_{G,\und \lambda}
\ar[d]^{p^{\circ}}\\
\rm pt \ar[r]^{i_{\rm pt}} &   Gr_{G} 
},\nonumber
\end{equation} 
we get a natural isomorphism
   $$ (i_{\rm pt})^* (p^\circ)_!(\C_{Gr_{G,\und \lambda}}[2\rm{ht}(\und
\lambda)])\simeq R\Gamma_c(Gr_{G,c(\und \lambda)}, \C_{Gr_{G,c(\und \lambda)}}[2\rm{ht}(\lambda)]). $$
Here $R\Gamma_c$ is the derived functor of global section functor with compact support. 
 Thus 
$$H^0((i_{\rm pt})^!p_*(IC_{\und \lambda}))\simeq  H_c^{2\rm ht(\lambda)}(Gr_{G,c(\und \lambda)}, \C)^*.$$
Note that there exists a natural isomorphism 
$$ H_c^{2\rm{ht}(\lambda)}(Gr_{G,c(\und\lambda)}, \C)^*\simeq H_{\rm top}(Gr_{G,c(\und
\lambda)},\C ).  $$
By the basic fact of Borel-Moore homology, the top components of $Gr_{G,c(\und \lambda)}$ provides a basis in $H_{\rm top}(Gr_{G,c(\und
\lambda)},\C )$. 
Hence our lemma follows.
\end{proof}

Let $\sigma$ be a Dynkin automorphism of $G$.  
 Note that $\sigma$ preserves $G({\cal O})$. Thus it descends to an action 
$\sigma$ on  $Gr_{G}$.
By pulling back sheaves, we get an auto-functor $\sigma^*$ of ${\rm Perv}_{G(\OO)}(Gr_{G})$. By the Tannakian formalism, there is an automorphism  $\tilde \sigma$ of $G^\vee$, such that the following diagram commutes
$$\xymatrix{
{\rm Perv}_{G(\OO)}(Gr_{G})\ar[r]^<<<<<{\mathbb H} \ar[d]^{\sigma^*} & {\rm Rep}(G^\vee) \ar[d]^{(\tilde \sigma)^*} \\
  {\rm Perv}_{G(\OO)}(Gr_{G})  \ar[r]^<<<<<{\mathbb H}  & {\rm Rep}(G^\vee)
}, $$
where $(\tilde \sigma)^*$ is the composition functor $(\rho,V)\mapsto (\rho\circ \tilde \sigma, V)$  for any representation $(\rho, V)$ of $G^\vee$.  

The following lemma asserts that  $\tilde \sigma$ is a Dynkin automorphism of $G^\vee$.
\begin{lemma}[{\cite[Theorem 4.2]{H})}]
\label{Tannakian_Dynkin_automorphism}
The automorphism $\tilde \sigma$ on $G^\vee$ is a Dynkin automorphism  arising
from the automorphism $\sigma$ on the root datum $(X^{\vee},X,\alpha_i^\vee,\alpha_i;i\in
I)$.
\end{lemma}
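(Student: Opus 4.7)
The plan is to identify $\tilde\sigma$ by first describing its action on a maximal torus of $G^\vee$ and on the root datum, and then showing that one can normalize $\tilde\sigma$ within its inner-automorphism coset so as to preserve a pinning.

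First, I would analyze the action of $\sigma^*$ on the simple objects of ${\rm Perv}_{G(\mathcal{O})}(Gr_G)$. Since $\sigma$ preserves $T$, $B$, and $G(\mathcal{O})$, it permutes the $G(\mathcal{O})$-orbits indexed by $X^\vee_+$ via $Gr_{G,\lambda} \mapsto Gr_{G,\sigma(\lambda)}$. Hence $\sigma^*(IC_\lambda) \simeq IC_{\sigma^{-1}(\lambda)}$, and applying $\mathbb{H}$ the irreducible representation $V^{\vee}_{\lambda}$ of $G^\vee$ is sent by $(\tilde\sigma)^*$ to $V^{\vee}_{\sigma^{-1}(\lambda)}$. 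Thus $\tilde\sigma$ permutes the highest weights of irreducibles of $G^\vee$ according to $\sigma$.

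Second, I would pin down $\tilde\sigma$ on a maximal torus. The fiber functor $\mathbb{H}$ carries a canonical $X^\vee$-grading given by the semi-infinite orbits $S_\mu := U(\mathcal{K}) \cdot [\mu]$ of the unipotent radical $U$, via $\mathbb{H}^\mu(-) := H^\ast_c(S_\mu, -)$. This grading singles out a maximal torus $T^\vee \subset G^\vee = {\rm Aut}^\otimes(\mathbb{H})$ with character lattice $X^\vee$. Because $\sigma$ preserves $U$ and sends $S_\mu$ to $S_{\sigma(\mu)}$, the Tannakian automorphism $\tilde\sigma$ preserves $T^\vee$ and acts on $X^*(T^\vee) = X^\vee$ (and dually on $X_*(T^\vee) = X$) by $\sigma$. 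Combined with the first step, this shows that $\tilde\sigma$ realizes the diagram automorphism of the dual root datum $(X^\vee, X, \alpha^\vee_i, \alpha_i; i \in I)$ coming from $\sigma$.

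Third, I would verify the pinning-preserving condition, which is the main obstacle: Tannakian reconstruction produces $\tilde\sigma$ only up to inner automorphisms of $G^\vee$, so I need to exhibit a distinguished representative. My approach is to reconstruct the simple root subgroups of $G^\vee$ geometrically from the minimal Levi subgroups $L_i \subset G$ via the natural maps $Gr_{L_i} \hookrightarrow Gr_G$, which under geometric Satake give morphisms $L_i^\vee \to G^\vee$ landing in the simple root subgroup attached to $\alpha_i^\vee$. Since the fixed pinning on $G$ restricts to pinnings on each $L_i$, and $\sigma$ permutes these pinned Levis via $i \mapsto \sigma(i)$, these data cut out a preferred pinning of $G^\vee$. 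The hard part is to check that the naturality isomorphism $\mathbb{H} \circ \sigma^* \simeq \mathbb{H}$ (induced by $\sigma \colon Gr_G \to Gr_G$) respects these Levi-compatible structures on the nose rather than only up to an inner automorphism, which reduces to a rank-one calculation parallel to the case analysis in Lemma \ref{pinning.g.sigma.s}.
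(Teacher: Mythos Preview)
The paper does not prove this lemma: it is stated with a citation to \cite[Theorem 4.2]{H} and no proof is given in the present paper, so there is no argument here to compare your proposal against.

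That said, your outline is essentially the strategy of the cited reference. A couple of comments on accuracy. In Step~3 you say that Tannakian reconstruction produces $\tilde\sigma$ ``only up to inner automorphisms'', but this is not the situation here: there is a canonical identification $\mathbb{H}\circ\sigma^*\simeq\mathbb{H}$ coming from the fact that $\sigma$ is an automorphism of $Gr_G$ (hypercohomology is insensitive to pullback along an isomorphism), and this fixes $\tilde\sigma$ on the nose as a specific element of ${\rm Aut}(G^\vee)$. The genuine work in Step~3 is therefore not to normalize within an inner coset, but to verify that this particular $\tilde\sigma$ preserves the pinning of $G^\vee$ that is built geometrically from $Gr_G$; your Levi/rank-one reduction is the right mechanism for this, though the reference to Lemma~\ref{pinning.g.sigma.s} is not quite apt (that lemma constructs a pinning on $G^\sigma$, which is a different issue). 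Steps~1 and~2 are correct and are exactly how one identifies the induced diagram automorphism on the dual root datum.
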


Abusing notation, denote by $\tilde \sigma$ the actions on $V_{\lambda_i}$, $V_{\und \lambda}$  and $V_{\und \lambda}^{G^\vee}$  induced by the automorphism $\tilde \sigma$ on $G^\vee$.

Let $\sigma^\sharp$ be the action on $V_{\und \lambda}^{G^\vee}$ induced by the interchange map on the components of $Gr_{G,\und c(\lambda)}$ via the natural isomorphism 
 $\alpha: V_{\und \lambda}^{G^\vee}\simeq H_{\rm top }(Gr_{G,c(\und \lambda)},\C)$ as in Lemma \ref{Tensor_Invariant_Cycles}.   
\begin{proposition}
\label{Tensor_invariant_cycle_Dynkin}
The actions of $\tilde \sigma$ and $\sigma^\sharp$ on $V_{\und \lambda}^{G^\vee}$ coincide.   
\end{proposition}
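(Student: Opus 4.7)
The plan is to follow the chain of canonical isomorphisms constructed in the proof of Lemma \ref{Tensor_Invariant_Cycles} and verify that every step is functorial with respect to $\sigma$. Concretely, I would show that under each of those isomorphisms the automorphism induced by pullback along $\sigma$ on the sheaf-theoretic side corresponds to the natural automorphism on the representation-theoretic side, so that at one end of the chain the induced map on $V_{\und \lambda}^{G^\vee}$ is $\tilde\sigma$ (by the defining property of $\tilde\sigma$) and at the other end it is $\sigma^\sharp$ (by definition as the permutation of top components on Borel--Moore homology).

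The key input is the very construction of $\tilde\sigma$ via the Tannakian formalism. Since $\sigma\colon Gr_G\to Gr_G$ is an automorphism, the pullback functor $\sigma^\ast$ on ${\rm Perv}_{G(\OO)}(Gr_G)$ intertwines with $\tilde\sigma^\ast$ on ${\rm Rep}(G^\vee)$ under the equivalence $\mathbb{H}$. Applied to the convolution sheaf $p_\ast(IC_{\und\lambda}) = IC_{\lambda_1}\ast\cdots\ast IC_{\lambda_n}$, the induced automorphism of its hypercohomology $V_{\und\lambda}$ is the tensor product of the $\tilde\sigma$-actions on each $V_{\lambda_i}$. Restricting to $G^\vee$-invariants, the induced action on ${\rm Hom}_{Gr_G}(IC_{[1]}, p_\ast IC_{\und\lambda})\simeq V_{\und\lambda}^{G^\vee}$ is precisely $\tilde\sigma$.

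Next I would trace the remaining isomorphisms of Lemma \ref{Tensor_Invariant_Cycles}. They are standard six-functor manipulations, each of which is natural with respect to the $\sigma$-action because $\sigma$ is an automorphism of $Gr_G$ fixing the base point $[1]$ and preserving $Gr_{G, c(\und\lambda)}$ (using $\sigma(\und\lambda)=\und\lambda$, which is implicit in the statement since otherwise $\sigma^\sharp$ is not even defined). In particular, the adjunction $(i_{\rm pt})_\ast\dashv (i_{\rm pt})^!$, the identifications $j_!(\C_{Gr_{G,\und\lambda}}[{\rm ht}(\und\lambda)])\simeq IC_{\und\lambda}\simeq j_\ast(\C_{Gr_{G,\und\lambda}}[2{\rm ht}(\und\lambda)])$, Poincar\'e--Verdier duality, and the base change square are all $\sigma$-equivariant. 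Unwinding, the $\sigma^\ast$-action at the beginning of the chain translates into the action of $\sigma$ on $H_c^{2{\rm ht}(\und\lambda)}(Gr_{G, c(\und\lambda)}, \C)^\ast\simeq H_{\rm top}(Gr_{G, c(\und\lambda)}, \C)$, which on Borel--Moore homology is exactly the permutation of top irreducible components --- that is, $\sigma^\sharp$ by definition.

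The main obstacle is bookkeeping: making precise that the identification $\mathbb{H}(\sigma^\ast\mathcal{F})\simeq \mathbb{H}(\mathcal{F})$ induced by $\sigma$ being an automorphism of $Gr_G$ coincides with the abstract $\tilde\sigma$-action defined via the Tannakian formalism, i.e.\ ensuring that this particular natural isomorphism, and not some other, appears throughout the chain. Once this compatibility is articulated --- its content is essentially Lemma \ref{Tannakian_Dynkin_automorphism} together with the commutative diagram preceding it --- the remaining verifications reduce to standard diagram chasing with proper base change and duality.
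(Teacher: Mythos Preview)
Your proposal is correct and follows essentially the same strategy as the paper: both trace the $\sigma$-action through the chain of isomorphisms in Lemma~\ref{Tensor_Invariant_Cycles}, identifying it as $\tilde\sigma$ on the representation side (via the Tannakian definition of $\tilde\sigma$ and Lemma~\ref{Tannakian_Dynkin_automorphism}) and as the permutation of top components on the Borel--Moore side. The one place where the paper is more explicit than your sketch is the assertion that the induced automorphism on $\mathbb{H}(p_\ast IC_{\und\lambda})\simeq V_{\und\lambda}$ is the \emph{tensor product} of the $\tilde\sigma$-actions on the factors: the paper justifies this by passing to the fusion product via the Beilinson--Drinfeld Grassmannian, where the compatibility of $\sigma^\ast$ with the monoidal structure is transparent, whereas you subsume it into the Tannakian intertwining --- both are valid, but you should make sure you can articulate why $\sigma^\ast$ is monoidal for the convolution product.
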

\begin{proof}
We consider the natural  isomorphisms $ \phi_i:   \sigma^*IC_{\lambda_i}\simeq IC_{\lambda_i}$ which are  compatible with the interchange action on  cycles (see \cite[Section 4]{H}). 
Applying the hypercohomology $\mathbb H$, we get automorphisms ${\mathbb H}(\phi_i): V_{\lambda_i}\simeq V_{\lambda_i}$.
  Lemma 4.1 in {\it loc.cit.} shows that
${\mathbb H}(\phi_i)$ coincides with the action $\tilde \sigma$ on $V_{\lambda_i}$. 
 
 Recall that the convolution product in ${\rm Perv}_{G(\OO)}(Gr_{G})$ can also be constructed as the fusion product of sheaves via Beilinson-Drinfeld Grassmannian (\cite[Section 5]{MV}). From this point of view, it is easy to see that the isomorphisms $\phi_i$ give rise to an isomorphism 
 $$ \phi: \sigma^*(IC_{\lambda_1}*IC_{\lambda_2}*\cdots *
IC_{\lambda_n})\simeq IC_{\lambda_1}*IC_{\lambda_2}*\cdots *
IC_{\lambda_n}.$$
Applying the functor ${\Bbb H}$, we get
$${\mathbb H}(\phi): V_{\lambda_1}\otimes V_{\lambda_2}\otimes \cdots \otimes
V_{\lambda_n}\simeq V_{\lambda_1}\otimes V_{\lambda_2}\otimes \cdots \otimes
V_{\lambda_n}.$$
By the proof in  \cite[Proposition 6.1]{MV} that $\mathbb H$ is a tensor functor, we  see that ${\mathbb H}(\phi)$ coincides with the diagonal automorphism ${\mathbb H}(\phi_1)\otimes {\mathbb H}(\phi_2)\otimes \cdots\otimes {\mathbb H} (\phi_n)$. Hence ${\mathbb H}(\phi)$ coincides with the the automorphism $\tilde \sigma$ on $V_{\lambda_1}\otimes V_{\lambda_2}\otimes \cdots \otimes
V_{\lambda_n}$.

By the proof of Lemma \ref{Tensor_Invariant_Cycles}, we have 
 $$
 H^0((i_{\rm pt})^! p_*IC_{\und \lambda})\simeq H_{\rm top}(Gr_{G,c(\und \lambda)},\C)\simeq V_{\und \lambda}^{G^\vee}.
 $$
From the counit of the adjunction between $(i_{\rm pt})_*$ and $i_{\rm pt}^!$, there exists a natural morphism 
\begin{equation}
\label{Adjuntion}
\iota: (i_{\rm pt})_*(i_{\rm pt})^! 
p_*IC_{\und \lambda}\to p_*IC_{\und \lambda}.
\end{equation}
Applying ${\mathbb H}$, we  get the natural inclusion $V_{\und \lambda}^{G^\vee}\hookrightarrow
 V_{\und \lambda}$.

Note that $ IC_{\lambda_1}*IC_{\lambda_2}*\cdots
*IC_{\lambda_n}=p_*(IC_{\und \lambda})$. Then  ${\mathbb H}(p_*IC_{\und \lambda})$
is naturally identified with the intersection cohomology of $\overline{Gr_{G,\und
\lambda}}$. There is a unique isomorphism $ \tilde \phi: \sigma^*IC_{\und
\lambda}\simeq IC_{\und \lambda}$,  induced from the interchange action on cycles classes. By natural constructions of $\phi$ and $\tilde \phi$,   
 the following diagram commutes 
\begin{equation}
\xymatrix{
 p_*\sigma^*IC_{\und \lambda}\ar[d]_{\theta} \ar[r]^<<<<<{p_*(\tilde
\phi)} &  p_*IC_{\und \lambda} \\
 \sigma^*p_*IC_{\und \lambda} \ar[ur]_{\phi} 
},
\end{equation}
where $\theta$ is given by the base-change isomorphism.
Note that
$\mathbb H(\theta)$ is the identity map on $\mathbb H(p_*IC_{\und \lambda})$ and $(i_{\rm{pt}})^!(\theta)$ is the identity map on $(i_{\rm pt})^!(p_*IC_{\und \lambda})$.  Therefore $\mathbb H(p_*(\tilde \phi))=\mathbb H(\phi)$ and $(i_{\rm{pt}})^{!}(p_*(\tilde \phi))=(i_{\rm pt})^{!}(\phi)$.   
 By functorialty of the counit 
$(i_{\rm pt})_*(i_{\rm pt})^! 
\to  id$, we have the following commutative diagram:
\begin{equation}
\xymatrix{
(i_{\rm pt})_*(i_{\rm pt})^! ( p_*\sigma^*IC_{\und \lambda}) \ar[d]^{\iota}  \ar[rr]^{(i_{\rm pt})_*(i_{\rm pt})^!p_*(\tilde
\phi)}  &&    (i_{\rm pt})_*(i_{\rm pt})^! (p_*IC_{\und \lambda} )\ar[d]^{\iota} \\
 p_*\sigma^*IC_{\und \lambda} \ar[rr]^<<<<<<<<<<<<{p_*(\tilde
\phi)} & & p_*IC_{\und \lambda} 
}
\end{equation}

Applying the hyper-cohomology $\mathbb H$ to this commutative diagram,  we can see that  the restriction of  
$\mathbb H(\phi)=\mathbb H(p_*\tilde \phi)$ on $V_{\und \lambda}^{G^\vee}$ coincides
with the automorphism 
$$\xymatrix{V_{\und \lambda}^{G^\vee}\simeq H^0\big((i_{pt})^!p_* IC_{\und \lambda}\big)~~~\ar[r]^{(i_{pt})^!(p_* \tilde\phi)}&~~~H^0\big((i_{pt})^!p_*IC_{\und
\lambda}\big)\simeq V_{\und \lambda}^G}. $$
The map $(i_{pt})^!(p_* \tilde\phi)$ interchanges the homology classes given by the top components of $Gr_{G,c(\und \lambda)}$  in $H_{top}(Gr_{G,c(\und \lambda)},\C)$. Hence the proposition follows.
\end{proof}

\section{Proof of main results}
\label{proof_main_results}
\subsection{Proof of Theorem \ref{Twining_tensor_Multiplicity}}
\label{Section_Proof_of_Twinning_Multiplicity}

Let $\sigma$ be the given Dynkin automorphism of $G$. 
It induces an automorphism $\sigma$ of the root datum $(X^\vee,X,\alpha_i^\vee,\alpha_i;i\in I)$. 
Further, we get an associated automorphism $\sigma^\vee$ of the dual datum $(X,X^\vee,\alpha_i,\alpha_i^\vee;i\in I)$.  Abusing notation, denote by $\sigma^\vee$ the Dynkin automorphism of $G^\vee$ arising from the diagram automorphism $\sigma^\vee$.

As explained as in Section \ref{Satake_Basis},  by Tannakian formalism, the Dynkin automorphism $\sigma^\vee$ on $G^\vee$ induces an Dynkin automorphism $\tilde \sigma$ on $G$, which is compatible with the Sakake basis (Proposition \ref{Tensor_invariant_cycle_Dynkin}).

\begin{lemma}
 \label{Dynkin_automorphisms_Conjugate}
 Let $\sigma_1$ and $\sigma_2$ be Dynkin automorphisms  of $G$ that induce the same diagram automorphism of root datum of $G$.  Denote by  $\sigma_1$ and $\sigma_2$  the induced actions on $V_{\und \lambda}^G$ respectively.
We have
$${\rm trace}\ (\sigma_1: V_{\und \lambda}^{G}\to V_{\und \lambda}^{G})={\rm trace}\
(\sigma_2: V_{\und \lambda}^{G}\to V_{\und \lambda}^{G}). $$
\end{lemma}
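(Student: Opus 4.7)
The plan is to reduce the statement to the observation that inner automorphisms of $G$ act trivially on $G$-invariant subspaces of tensor products.

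\medskip

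\emph{Step 1: Realize $\sigma_2$ as an inner twist of $\sigma_1$.} By definition, each $\sigma_i$ preserves $B$, $T$, and a pinning $P_i=(T,B,x_j^{(i)},y_j^{(i)};\,j\in I)$ (since all Dynkin automorphisms in the paper's convention preserve the fixed choice of $B$ and $T$). Since $\sigma_1,\sigma_2$ induce the same diagram automorphism and any two pinnings over the same $(T,B)$ are conjugate by a unique element of $T$ modulo $Z(G)$, there exists $t\in T$ with $\mathrm{Ad}(t)\cdot P_1=P_2$. A direct check on generators $x_j^{(2)}(a)=t\,x_j^{(1)}(a)\,t^{-1}$ using the pinning-preservation of $\sigma_1$ shows
\[
\sigma_2=\mathrm{Ad}(t)\circ \sigma_1\circ \mathrm{Ad}(t^{-1}).
\]

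\medskip

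\emph{Step 2: Compare the actions on each $V_{\lambda_i}$.} Let $\rho_i:G\to \mathrm{GL}(V_{\lambda_i})$ be the representation, and let $A_k^{(i)}\in \mathrm{GL}(V_{\lambda_i})$ be the operator representing $\sigma_k$ (for $k=1,2$), characterized by $A_k^{(i)}\rho_i(h)(A_k^{(i)})^{-1}=\rho_i(\sigma_k(h))$ together with fixing the highest-weight vector of $V_{\lambda_i}$. Combining the intertwining relation with Step 1 and using that $\rho_i$ is irreducible (Schur) gives
\[
A_2^{(i)}\;=\;c_i\,\rho_i(t)\,A_1^{(i)}\,\rho_i(t)^{-1}
\]
for some scalar $c_i\in\C^{\times}$. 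Applying both sides to the highest-weight vector and using $\rho_i(t^{-1})v_{hw}=\lambda_i(t)^{-1}v_{hw}$, $A_1^{(i)}v_{hw}=v_{hw}$, and $\rho_i(t)v_{hw}=\lambda_i(t)v_{hw}$ forces $c_i=1$.

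\medskip

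\emph{Step 3: Conclude on the invariant subspace.} On $V_{\und\lambda}=V_{\lambda_1}\otimes\cdots\otimes V_{\lambda_n}$, Step 2 gives
\[
A_2^{\otimes}\;=\;\rho(t)\,A_1^{\otimes}\,\rho(t)^{-1},
\]
where $\rho(t):=\rho_1(t)\otimes\cdots\otimes \rho_n(t)$ is the diagonal $G$-action of $t$. Since the subspace $V_{\und\lambda}^G$ is $G$-invariant, $\rho(t)$ restricts to the identity on $V_{\und\lambda}^G$; hence the restrictions of $A_1^{\otimes}$ and $A_2^{\otimes}$ to $V_{\und\lambda}^G$ coincide. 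In particular their traces are equal.

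\medskip

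The main thing to be careful about is Step 1: one must verify that two pinning-preserving automorphisms of $G$ inducing the same diagram automorphism differ by $\mathrm{Ad}(t)$-conjugation for some $t\in T$, rather than by a less controlled inner twist. Once this is in hand, Steps 2--3 are formal, and Step 2's scalar normalization works out to $1$ precisely because both $A_1^{(i)}$ and $A_2^{(i)}$ are required to fix the (unique up to scalar) highest-weight vector.
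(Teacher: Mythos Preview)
Your proof is correct and follows essentially the same approach as the paper: show that $\sigma_1$ and $\sigma_2$ are conjugate by an inner automorphism (the paper constructs an automorphism $\psi$ fixing $T$ pointwise and carrying one pinning to the other, then invokes the isomorphism theorem to conclude $\psi$ is inner; you identify $\psi=\mathrm{Ad}(t)$ for $t\in T$ directly), and then use that inner automorphisms act trivially on $V_{\und\lambda}^G$. Your Steps~2--3 are more explicit than the paper's concluding sentence, in particular the verification that the Schur scalar is $1$; the paper simply asserts that the induced actions of $\psi$ and $\psi^{-1}$ preserve $V_{\und\lambda}^G$ and that the lemma follows.
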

\begin{proof}
Assume that $\sigma_1$ preserves a pinning $(B,T, x_i,y_i;i\in I)$ and $\sigma_2$
preserves another pinning $(B,T,x'_i,y'_i;i\in I)$. Let $\psi$ be the automorphism
of $G$ such that its restriction on $T$ is an identity map and
$\psi(x_i(a))=x'_i(a)$ , $\psi(y_i(a))=y_i'(a)$. By isomorphism
theorem of reductive groups, $\psi$ is an inner automorphism of $G$. Clearly $\sigma_2=\psi\circ \sigma_1\circ \psi^{-1}$. Note that the
induced actions $\psi$ and $\psi^{-1}$ on $V_{\und \lambda}$ preserve
$V_{\und \lambda}^G$. Hence the lemma follows.

 \end{proof}
 
By  Lemma \ref{Dynkin_automorphisms_Conjugate}, the Dynkin automorphism $\sigma$ of $G$ can be replaced by the automorphism $\tilde \sigma$. 
By Proposition \ref{Tensor_invariant_cycle_Dynkin}, the trace of $\sigma$ on $V_{\und \lambda}^G$ is equal to the number of $\sigma^\vee$-stable top components of $Gr_{G^\vee,c(\und \lambda)}$.
By Lemma \ref{Tensor_Invariant_Cycles}, the dimension of $W_{\und \lambda}^{G_\sigma}$ is equal to the number of the top components of $Gr_{(G_\sigma)^\vee,c(\und \lambda)}$.
Note that $(G_\sigma)^\vee$ is isomorphic to the identity component group of the $\sigma^\vee$-fixed
points in $G^\vee$ (Remark \ref{remark.3.13.2015s}). To summarize, we have the following sequence
\be
{\rm trace}(\sigma: V_{\und \lambda}^G\ra V_{\und \lambda}^G) \stackrel{{\rm Prop.}\ref{Tensor_invariant_cycle_Dynkin}}{=} \# ({\bf T}_{\und \lambda, G^\vee})^{\sigma^\vee}\stackrel{{\rm Cor.} \ref{3.27.3.23.14hhh}}{=} \# {\bf T}_{\und \lambda, (G_{\sigma})^\vee}\stackrel{{\rm Lem.} \ref{Tensor_Invariant_Cycles}}{=} \dim W_{\und \lambda}^{G_\sigma}. \nonumber
\ee
 Theorem \ref{Twining_tensor_Multiplicity}  is proved.


\subsection{Proof of Theorem \ref{Saturation_Problem_Dynkin}}
\label{Section_Proof_Saturation}
By Theorem \ref{3.27.3.23.14h} and Lemma \ref{Tensor_Invariant_Cycles}, the dimension of the tensor invariant space $V_{\und \lambda}^G$ is equal to the cardinality of the set ${\bf C}_{{\und \lambda}, G^\vee}$. Therefore  the claim that $G$ is of saturation property with factor $k$ is equivalent to that
\begin{itemize}
\item for any sequence $\underline{\lambda}=(\lambda_1,\ldots, \lambda_n)$ of dominant weights of $G$ such that $\sum_{i=1}^n \lambda_i$ is in the root lattice of $G$, if ${\bf C}_{N\underline{\lambda}, G^\vee}$ is nonempty for some positive integer $N$, then ${\bf C}_{k\underline{\lambda}, G^\vee}$ is nonempty.
\end{itemize}

 Now we prove Theorem \ref{Saturation_Problem_Dynkin}. Let $\und \lambda=(\lambda_1,\lambda_2,\cdots,\lambda_n)$ be a sequence of dominant weights of $G_\sigma$ such that $\sum_{i=1}^n \lambda_i$ is in the root lattice of $G_\sigma$. Assume that there is a positive integer $N$ such that 
${\bf C}_{N {\und \lambda}, (G_\sigma)^{\vee}}$  is nonempty. It remains to show that ${\bf C}_{kc_\sigma\cdot\und \lambda, (G_\sigma)^\vee}$ is nonempty.

Note that $(G_\sigma)^\vee$ is the identity component of the  $\sigma^\vee$-fixed points of $G^\vee$. Theorem  \ref{technical.thm.1} implies that 
$({\bf C}_{N\und \lambda,G^\vee})^{\sigma^\vee}$ is nonempty. It follows that  ${\bf C}_{N\und \lambda,G^\vee}$ is nonempty. By the assumption that $G$ is of saturation property with factor $k$, we know that ${\bf C}_{k\und \lambda,G^\vee}$ is nonempty. Theorem \ref{technical.thm.2} implies that $({\bf C}_{kc_\sigma\cdot\und \lambda, G^\vee})^{\sigma^\vee}$ is nonempty. Then again by Theorem \ref{technical.thm.1}, it follows that ${\bf C}_{kc_\sigma\cdot\und \lambda, G^\vee_\sigma}$ is nonempty.
To summarize, we have the following sequence
\begin{align}{\bf C}_{N\und \lambda, (G_\sigma)^\vee}\neq \emptyset & \stackrel{{\rm Thm.}\ref{technical.thm.1}}{\Longleftrightarrow}  ({\bf C}_{N\und \lambda,G^\vee})^{\sigma^\vee}\neq \emptyset~~~{\Longrightarrow} ~~~{\bf C}_{N\und \lambda,G^\vee}\neq \emptyset ~\stackrel{{\rm Assumption}}{\Longrightarrow}~{\bf C}_{k\und \lambda, G^\vee}\neq \emptyset~ \nonumber\\
&\stackrel{{\rm Thm.}\ref{technical.thm.2}}{\Longrightarrow}~({\bf C}_{kc_\sigma\cdot\und \lambda, G^\vee})^{\sigma^\vee}\neq \emptyset~
\stackrel{{\rm Thm.}\ref{technical.thm.1}}{\Longleftrightarrow}~ {\bf C}_{kc_\sigma\cdot\und \lambda, (G_\sigma)^\vee}\neq \emptyset.\nonumber
\end{align}
Theorem \ref{Saturation_Problem_Dynkin} is proved.

Jiuzu Hong

Department of Mathematics, Yale University, New Haven, CT 06511. 

   \texttt{jiuzu.hong@yale.edu}
\medskip

Linhui Shen

Department of Mathematics, Northwestern University, Evanston, IL 60208. 

\texttt{linhui.shen@northwestern.edu}

\end{document}